\font\headd=cmr8
\begin{document}
\thispagestyle{plain}
\markboth{}{}
\small{\addtocounter{page}{0} \pagestyle{plain}
\newtheorem{theorem}{Theorem}[section]
\newtheorem{lemma}[theorem]{Lemma}
\newtheorem{corollary}[theorem]{Corollary}
\newtheorem{proposition}[theorem]{Proposition}

\theoremstyle{definition}
\newtheorem{definition}[theorem]{Definition}
\newtheorem{example}[theorem]{Example}

\theoremstyle{remark}
\newtheorem{remark}{Remark}
\newtheorem{notation}{Notation}

\vspace{0.2in}
\begin{center}
\noindent{\LARGE \bf  Representations and (2,3)-cohomology of Bol algebras with applications}
\end{center}
\begin{center}
\noindent{ A. Nourou Issa \\
D\'epartement de Math\'ematiques\\ Universit\'e d'Abomey-Calavi\\ 01 BP 4521 Cotonou, B\'enin\\
{\verb|woraniss@yahoo.fr|}
}
\end{center}
\vspace{0.15in}

{\footnotesize {\bf Abstract.} A representation theory for Bol algebras is proposed.
For a suitable $(2,3$)-cohomology theory for Bol algebras, we define
a ($2,3$)-coboundary with companion and next we define a ($2,3$)-cohomology group. Deformations of Bol algebras are investigated. In particular,
one-parameter infinitesimal deformations of Bol algebras are characterized in terms of
Bol algebras of deformation type and ($2,3$)-cocycles with coefficients in the adjoint
representation. The ($2,3$)-cohomology group is also applied to study abelian extensions of
Bol algebras.
}\\
{\it Mathematics Subject Classification (2020)}: 17A40, 17B10, 17B56, 17D99.\\
{\it Key Words and Phrases}: Representation, cohomology, deformation, abelian extension, Bol algebra.\\
}

\pagestyle{myheadings}
 \markboth{\headd  {\sc{Issa} } $~~~~~~~~~~~~~~~~~~~~~~~~~~~~~~~~~~~~~~~~~~~~~~~~~~~~~~~~~~~~~~~~~~~~~$}
 {\headd $~~~~~~~~~~~~~~~~~~~~~~~~~~~~~~~~~~~~~~~~~~~~~~~~~~~~~~~~~~~~~~${ \sc{Issa}}}

\section{Introduction}
Bol algebras were introduced in \cite{M1, SM} as infinitesimal objects associated with
local smooth Bol loops, in a study of the differential geometry of these loops. Just as
the case of relationships between Lie algebras and Lie groups, Bol algebras are tangent
algebras to local smooth Bol loops (see also \cite{MS, S} for an overview).
Some topics of the general theory of Bol algebras
were considered in \cite{HP, KZ, M2, Per}. Apart from Akivis algebras or Lie-Yamaguti
algebras (first called ``generalized Lie triple systems'' \cite{Y1}), Bol algebras
constitute an important class of so-called binary-ternary algebras. As Lie-Yamaguti algebras, Bol algebras also constitute a generalization of Lie triple systems.
\par
Deformation problems appear in various areas of mathematics and mathematical physics. The deformation theory found applications, e.g., in number theory, quantum mechanics (deformation quantization) with connections in pure mathematics, etc. The deformation of algebraic structures started with the works \cite{G1, G2}. Next
the deformation theory is extended to Lie algebras in the works \cite{NR1, NR2}. The 
representations and deformations of various algebraic structures were considered by
many researchers. It is well known that deformation theory of algebras has many
applications in mathematics and physics.
\par
A suitable cohomology of a given algebraic structure controls its deformations and
abelian extensions. The representation and cohomology theory of Lie triple systems 
was first considered in \cite{Y2, Y4} (see also \cite{Ha, HoP, KT} for other approaches
to this theory). In \cite{Zh} the cohomology of Lie triple systems was studied using the 
cohomology of Leibniz algebras. A cohomology theory for Lie-Yamaguti algebras was initiated 
in \cite{Y5} while in \cite{ZL} infinitesimal deformations and abelian extensions of
Lie-Yamaguti algebras were characterized by the $(2,3)$-cohomology group (here one
observes a generalization of results from \cite{Zh} in full analogy of a generalization
of Lie triple systems by Lie-Yamaguti algebras). Infinitesimal and formal deformations of Lie-Yamaguti algebras were also considered in \cite{LCM}.
\par
As mentioned above, Bol algebras constitute another generalization of Lie triple systems. Analyzing the set of defining identities of a Bol algebra (see Definition \ref{Def2.1}), one observes that a Bol algebra differentiates from a Lie triple system by an additional binary skew-symmetric operation satisfying a single one identity (see the axiom (B2) in Definition \ref{Def2.1}). This seemingly modest generalization leads to significant complexities when trying to extend results from Lie triple systems up to Bol algebras (e.g., unlike the case of Lie-Yamaguti algebras which are also a generalization of Lie triple systems, the ternary operation in Bol algebras is not a derivation).
A cohomology and deformation theory for Bol algebras constitutes, so far, a missing part in their general theory.
So our concern in this paper is to set up a study of a lower order cohomology theory for Bol algebras. However we observed that some peculiarities appear while dealing with a
suitable definition of a $(2,3)$-coboundary for Bol algebras (in fact we found essential
to define a coboundary with companion). We show that our cohomology of
Bol algebras is suitable in a study of deformations and abelian extensions of Bol algebras.
\par
The organization of the paper is as follows. In section 2 we define a representation and a suitable $(2,3)$-cohomology group for a Bol algebra and we point out that any representation of a Maltsev algebra induces a representation of the associated Bol algebra. In section 3
$(2,3)$-cocycles are used to investigate deformations of
Bol algebras. It turns out that the characterization of  $t$-parameter infinitesimal deformations in terms of
$(2,3)$-cocycles requires an additional notion of a ``Bol algebra of deformation type''
(a similar notion is introduced in \cite{ZL} in case of deformations of Lie-Yamaguti
algebras, and it seems that the notion of algebras of deformation type is essential in
the deformation theory of some types of algebras). The section 4 is devoted to a study 
of abelian extensions of Bol algebras. The interplay between abelian split extensions of Bol algebras and their representations or their $(2,3)$-cohomology groups is considered. It is observed that there is a one-to-one correspondence between equivalent classes of abelian extensions of a Bol algebra $B$ by its $B$-module $V$ and elements of the $(2,3)$-cohomology group.
\par
Throughout this paper, all vector spaces and algebras are finite-dimensional and are considered over a base field
$\mathbb{K}$ of characteristic zero.

\section{Representation and cohomology}
In this section, we recall basic facts regarding Bol algebras and next we present a 
representation and a $(2,3)$-cohomology theory for Bol algebras. In \cite{NB} a representation of Bol algebras is defined, but from this definition representations of Lie triple systems (as in \cite{Y2, Y4, Zh}) are not recovered; moreover, it seems that such a representation of Bol algebras could not be applied, e.g., in a study of abelian extensions of Bol
algebras.
\begin{definition}\label{Def2.1} (\cite{M1, SM}).
A Bol algebra is a triple $(B,*,[ , , ])$, where
$B$ is a vector space, ``$*$'': $B \times B \rightarrow B$ a bilinear map (the binary
operation on $B$), and ``$[ , , ]$'': $B \times B \times B \rightarrow B$ a
trilinear map (the ternary operation on $B$) such that
\par
(B01) $x*y = - y*x$,
\par
(B02) $[x,y,z] = - [y,x,z]$,
\par
(B1) ${\circlearrowleft}_{x,y,z} [x,y,z] = 0$,
\par
(B2) $[x,y, u*v] = [x,y,u]*v + u*[x,y,v] + [u,v,x*y] - uv*xy$,
\par
(B3) $[x,y,[u,v,w]] = [[x,y,u],v,w] + [u,[x,y,v],w] + [u,v,[x,y,w]]$\\
for all $u,v,w,x,y,z \in B$, where ${\circlearrowleft}_{x,y,z}$ denotes the sum 
over cyclic permutation of $x,y,z$ and $uv*xy$ means $(u*v)*(x*y)$.
\end{definition}
Observe that when $x*y=0$ in ($B,*,[ , , ]$), we gets a \textit{Lie triple system}
$(B, [ , , ])$ so one could think of a Bol algebra $(B,*,[ , , ])$ as a Lie triple
system $(B, [ , , ])$ with an additional skew-symmetric binary operation ``$*$'' on
$B$ such that the compatibility condition (B2) holds (this vision of a Bol algebra is
of prime significance throughout  the present paper; e.g. see the proof of Theorem 3.4
in section 3). The algebra $(B,*,[ , , ])$ will be denoted simply by $B$.
\par
The definition of a Bol algebra as above is the one of a \textit{left} Bol algebra
(see also \cite{HP, Per}). In a study of the tangent algebra of a local analytic loop
satisfying the right Bol identity, one considers \textit{right} Bol algebras
\cite{MS, S, SM} (see also \cite{HP, KZ}). From a left Bol algebra
($B,*,[ , , ]$), one defines a right Bol algebra structure on the vector space $B$
by the new operations
\par
$x \cdot y := - x*y$,
\par
$(z,x,y) := -[x,y,z]$. \\
Throughout this paper, by a Bol algebra we always mean a left Bol algebra.
\par
In \cite{KZ} the classification of all two-dimensional right Bol algebras over $\mathbb{R}$ is given. From this classification we obtain the following example of two-dimensional left Bol algebra.
\begin{example}\label{Ex2.2}
 Let ${\mathcal{B}_2}$ be a two-dimensional real vector space with basis $\{e_1, e_2\}$. Define on ${\mathcal{B}_2}$ the following bilinear and trilinear nonzero products:
 \par
 $e_1 * e_2 = - e_2$,
 \par
 $[e_1, e_2, e_1] = \lambda e_2$,
 \par
 $e_i * e_j = - e_j * e_i$, $[e_i, e_j, e_k] = - [e_j, e_i, e_k]$, $i \neq j$, \\
 where $\lambda \in \mathbb{R}$. Then $({\mathcal{B}_2},*, [, ,])$ is a left Bol algebra.
\end{example}
\par
Recall that a Maltsev algebra is a nonassociative algebra $(A, \cdot )$ satisfying $x \cdot y = - y \cdot x$ and $xy \cdot zx = (xy \cdot z) \cdot x + (yz \cdot x) \cdot x + (zx \cdot x) \cdot y$ for all $x,y,z$ in $A$.
\par
A representation theory for Maltsev algebras was developed in \cite{Y3}. The definition of the representation of a Maltsev algebra is reminded in the proof of Proposition \ref{Pr2.7} below. For now, we recall the construction of left Bol algebras from Maltsev algebras.
\begin{example}\label{Ex2.3}
 Given any Maltsev algebra $(M,*)$, one gets a left Bol algebra $(M,*,[, ,])$ if define on $(M,*)$ a ternary operation as $[x,y,z] := \frac{1}{3} (x*yz - y*xz + 2xy*z)$ \cite{M1} (see also in the proof of Proposition \ref{Pr2.7} below). Therefore, any alternative algebra gives rise to a Bol algebra.
 \end{example}
\begin{definition}\label{Def2.4}(\cite{M1, SM}). Let $B$ be a Bol algebra. A linear map
$\Pi : B \rightarrow B$ satisfying, $\forall u,v,w \in B$,
\par
$\Pi (u*v) = (\Pi u) * v + u * (\Pi v) + [u,v,\chi] - uv* \chi$,
\par
$\Pi ([u,v,w]) = [\Pi u,v,w] + [u, \Pi v, w] + [u,v,\Pi w]$\\
for some $\chi \in B$ is called a pseudoderivation  with companion $\chi$ of $(B,*,[ , , ])$.
\end{definition}
From the axioms (B2) and (B3) it is clearly seen that the operators $\delta (x,y)$
defined by $\delta (x,y)(z) := [x,y,z]$ are pseudoderivations with companion $x*y$
of the Bol algebra $B$.
\par
The notion of a pseudoderivation with companion allows an embedding of any Bol
algebra into some enveloping Lie algebra: if consider the vector space of all 
pseudoderivations with companion $Pder (B) := \{ (\Pi , \chi)\}$ of a given Bol
algebra $(B,*,[ , , ])$, then the external direct sum $\mathfrak{g} := B \dot{+} Pder (B)$
is a Lie algebra and $B$ is isomorphic to a subspace of $\mathfrak{g}$ (\cite{M1}; see
also the survey \cite{MS}). The notion of a pseudoderivation is extended to the one of
a \textit{hypoderivation with companion} in case of hyporeductive triple algebras
(\cite{Iss}) which are a kind of a generalization of Bol algebras. In accordance with
the theme of the present paper, the notion of pseudoderivation of Bol algebras as above
is also generalized in the presence of a representation of a given Bol algebra (see Definition \ref{Def2.12} below).
\begin{definition}\label{Def2.5}
 Let $B$ be a Bol algebra and $V$ a vector space. Let
$\rho : B \rightarrow End (V)$ be a linear map, where $End (V)$ is the vector space of
all endomorphisms of $V$, and $D, \theta : B \times B \rightarrow End (V)$ bilinear maps.
The triple $(\rho ,D, \theta)$ is called a representation of $B$ in $V$ (and then $V$ (or
$(V,\rho ,D, \theta)$) is called a $B$-module) if the following conditions holds:
\par
$\bullet$ (R1) $D(x_1 , x_2) + \theta (x_1 , x_2) - \theta (x_2 , x_1) = 0$;
\par
$\bullet$ (R21) $[D (x_1 , x_2), \rho (y_1)] = \rho ([x_1, x_2, y_1]) - \theta (y_1, x_1 * x_2)
+ \rho (x_1 * x_2)\rho (y_1)$;
\par
$\bullet$ (R22) $\theta (x_1, y_1 * y_2) = \rho (y_1) \theta (x_1, y_2) - \rho (y_2) \theta (x_1, y_1)
- {\big (} D(y_1 , y_2) -  \rho (y_1 * y_2) {\big )} \rho (x_1)$;
\par
$\bullet$ (R31) $[D (x_1 , x_2), D (y_1 , y_2)] = D ([x_1 , x_2, y_1], y_2) + D (y_1, [x_1 , x_2, y_2])$;
\par
$\bullet$ (R32) $[D (x_1 , x_2), \theta (y_1, y_2)] = \theta ([x_1 , x_2, y_1], y_2) 
+ \theta (y_1, [x_1 , x_2, y_2])$;
\par
$\bullet$ (R33) $\theta (x_1 , [y_1, y_2, y_3]) = \theta (y_2, y_3) \theta (x_1, y_1) 
- \theta (y_1, y_3) \theta (x_1, y_2) + D (y_1, y_2)\theta (x_1, y_3)$\\
for all $x_i,y_i \in B$.
\end{definition}
Because of (R1) the representation $(\rho ,D, \theta)$ could be also written as $(\rho , \theta)$. Observe that if
$\rho = 0$ and $x*y = 0$, $\forall x,y \in B$, then (R1), (R31), (R32),
and (R33) imply that $(V,\theta)$ is a representation of the Lie triple system 
$(B,[ , , ])$ (see \cite{Zh}).
\begin{example}\label{Ex2.6}
In case when $V=B$, there is a natural representation (called the
\textit{adjoint representation}) of the Bol algebra $B$, where the representation maps
$\rho$, $D$, and $\theta$ are given by 
\par
$\rho (u)(v) = u*v$, $D(u,v)(w) = [u,v,w]$, $\theta (u,v)(w) = [w,u,v]$\\
for all $u,v,w \in B$.
\end{example}
It is well-known (see \cite{M1, MS}) that any Maltsev algebra has a natural Bol algebra structure (the associated Bol algebra; see also Example \ref{Ex2.3} above). Therefore a class of nontrivial examples of $(\rho, D,\theta)$ for Bol algebras is constructed from the following result.
\begin{proposition}\label{Pr2.7}
Let $(M,*)$ be a Maltsev algebra and $\rho$ a representation of $(M,*)$. Then $\rho$ induces a representation of the Bol algebra $(M,*,[ , , ])$ associated with $(M,*)$.
\end{proposition}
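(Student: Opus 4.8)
The plan is to produce the maps $D$ and $\theta$ explicitly from $\rho$ and the Maltsev product, guided by the adjoint representation of Example \ref{Ex2.6} together with the ternary operation of Example \ref{Ex2.3}. Writing $[\rho(u),\rho(v)] = \rho(u)\rho(v) - \rho(v)\rho(u)$, I would set
\[
D(u,v) := \tfrac{1}{3}\big([\rho(u),\rho(v)] + 2\rho(u*v)\big), \qquad
\theta(u,v) := \tfrac{1}{3}\big(\rho(u)\rho(v) + 2\rho(v)\rho(u) - \rho(u*v)\big).
\]
These are exactly the operators obtained by substituting $\rho(u)(w)=u*w$ into $D(u,v)(w)=[u,v,w]$ and $\theta(u,v)(w)=[w,u,v]$ and replacing each left multiplication by the abstract operator $\rho$, so that for the regular representation they recover the adjoint representation. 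The goal is then to check that $(\rho,D,\theta)$ satisfies (R1)--(R33).

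First I would state the defining identities of a Maltsev representation and observe that (R1) holds identically: since $\rho(u*v) = -\rho(v*u)$ by anticommutativity of $*$ and linearity of $\rho$, a direct cancellation in $D(x_1,x_2)+\theta(x_1,x_2)-\theta(x_2,x_1)$ yields $0$ with no appeal to any Maltsev axiom.

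The cleanest route for the remaining identities is conceptual. A Maltsev-module structure on $V$ is equivalent to the assertion that the split null extension $M\oplus V$, with the anticommutative product $(x,a)*(y,b):=(x*y,\ \rho(x)b-\rho(y)a)$, is a Maltsev algebra. Applying the construction of Example \ref{Ex2.3} to this Maltsev algebra turns $M\oplus V$ into a Bol algebra, in which $M\cong\{(x,0)\}$ is a subalgebra carrying its associated Bol structure and $V\cong\{(0,a)\}$ is an abelian ideal. Computing the three brackets $(x,0)*(0,a)$, $[(x,0),(y,0),(0,a)]$ and $[(0,a),(x,0),(y,0)]$ inside this Bol algebra reproduces precisely $\rho$, $D$ and $\theta$ above; hence $(\rho,D,\theta)$ is nothing but the module data whose semidirect Bol extension is $M\oplus V$. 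Since (R1)--(R33) are precisely the conditions equivalent to the semidirect product $M\oplus V$ being a Bol algebra, they then hold automatically.

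If the equivalence between Bol modules and split null extensions is not invoked, one verifies (R21)--(R33) by hand, expanding $D$ and $\theta$ and repeatedly using the Maltsev representation identities together with the Maltsev identity $(x*y)*(z*x) = ((x*y)*z)*x + ((y*z)*x)*x + ((z*x)*x)*y$. I expect the main obstacle to be (R22) and (R33): these are the conditions in which $\theta$ is evaluated on a product $y_1*y_2$ or on a ternary bracket $[y_1,y_2,y_3]$, so they encode the fact that the ternary operation of a Bol algebra is not a derivation of $*$ (the term $-uv*xy$ in (B2)). The correction terms $\pm\rho(u*v)$ built into $D$ and $\theta$ are exactly what absorb this failure, and checking that they do so requires the full linearized strength of the Maltsev identity rather than the simpler module axioms that suffice for (R21), (R31) and (R32).
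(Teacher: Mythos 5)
Your operators $D$ and $\theta$ are exactly the ones the paper uses (in Yamaguti's notation they are $\tfrac{1}{3}D_2$ and $\tfrac{1}{3}{\theta}_2$), and your observation that (R1) follows by pure cancellation from skew-symmetry of $*$ is correct; but your main argument is genuinely different from the paper's. The paper proceeds computationally: it quotes \cite{Y6} for the Lie-triple-system conditions (R1), (R31), (R32), (R33) and then verifies the two Bol-specific conditions (R21) and (R22) by direct operator calculation, using respectively the characterizations (\ref{E:2.2}) and (\ref{E:2.1}) of a Maltsev representation. You instead argue structurally: the split null extension $M\oplus V$ is a Maltsev algebra, the construction of Example \ref{Ex2.3} turns it into a Bol algebra, this Bol algebra is precisely the semidirect product $(M\oplus V,\widetilde{*},\{ , , \})$ attached to $(\rho,D,\theta)$ (your computations of the mixed brackets confirm this, and all terms with two or more entries in $V$ vanish on both sides), and Proposition \ref{Pr2.9} then yields that $(\rho,D,\theta)$ is a representation. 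This is cleaner and explains conceptually where the $\pm\rho(u*v)$ correction terms come from, but it buys its brevity from two inputs: (i) the equivalence between the paper's working definition of a Maltsev representation, namely identities (\ref{E:2.1})--(\ref{E:2.2}), and the split-null-extension characterization --- this is classical (it is how Yamaguti and Sagle set up the theory in \cite{Y3, Sag}) but is not proved in the paper, so you should cite it explicitly rather than assert it; and (ii) Proposition \ref{Pr2.9}, whose proof the paper only indicates by reference to Theorem \ref{Th3.4} --- no circularity arises, since that proof nowhere uses Proposition \ref{Pr2.7}, but your argument is only as complete as that omitted proof. One small correction to your closing remark: in the paper's organization (R33), like (R31) and (R32), comes for free from the Lie-triple-system theory of \cite{Y6}, since it involves only $D$, $\theta$ and the ternary bracket; the conditions that genuinely require the Maltsev representation identities are (R21) and (R22), which is exactly where the paper spends its effort.
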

\begin{proof}
 Given a Maltsev algebra $(M,*)$ with representation $\rho$, the following notations were introduced in \cite{Y6}, for an integer $k$:
 \par
 $[x,y,z]_k := x*yz - y*xz +kxy*z$,
 \par
 ${\theta}_k (x,y) := \rho (x) \rho (y) + k \rho (y)\rho (x) - \rho (x*y)$,
 \par
 $D_k (x,y) := [\rho (x),\rho (y)] + k \rho (x*y)$.\\
 That $\rho$ is a representation of the Maltsev algebra $(M,*)$ means that $\rho$ satisfies either of the following equivalent conditions (see Lemma 7.1 in \cite{Y3}).
 \begin{equation}\label{E:2.1}
  \rho (x*yz) - \rho (z)\{ \rho (x), \rho (y)\} + \rho (y)\{ \rho (x), \rho (z)\} = \{\rho (x),\rho (y*z)\} - \rho (z)\rho (x*y) + \rho (y)\rho (x*z),
 \end{equation}
 \begin{equation}\label{E:2.2}
  [D_1 (x,y) , \rho (z)] = \rho ([x,y,z]_1),
 \end{equation}
 where $\{ \rho (x), \rho (y)\}$ denotes the Jordan product
 $\rho (x)\rho (y) + \rho (y)\rho (x)$. In \cite{L} it is proved that $(M,*)$ is a Lie triple system with respect to the ternary composition $[x,y,z]_2$ while from \cite{M1} we know that $(M,*)$ is a Bol algebra with respect to the ternary composition $[x,y,z] := \frac{1}{3} [x,y,z]_2$. Then $(M,*,[, ,])$ is called the Bol algebra associated with
 $(M,*)$.
 \par
 Now we define
 \par
 $\theta (x,y) := \frac{1}{3} {\theta}_2 (x,y)$,
 \par
 $D(x,y) := \frac{1}{3} D_2 (x,y)$.\\
 We claim that $(\rho ,D, \theta)$ as defined above is a representation of the Bol algebra $(M,*,[, ,])$ associated with $(M,*)$.
 \par
 That $(\rho ,D, \theta)$ satisfies (R1), (R31), (R32), and (R33) follows from \cite{Y6}. We just need to check (R21) and (R22) for $(\rho ,D, \theta)$. We have
 \par
 $3([D(x,y),\rho (z)] - \rho ([x,y,z]) + \theta (z,x*y) - \rho (x*y)\rho (z))$
 \par
 $= [3D(x,y),\rho (z)] - \rho (3[x,y,z]) + 3\theta (z,x*y) - 3\rho (x*y)\rho (z)$
 \par
 $= [[\rho (x),\rho (y)], \rho (z)] + [\rho (x*y),\rho (z)] - \rho (x*yz) + \rho (y*xz) - \rho (xy*z)$
 \par
 $= [[\rho (x),\rho (y)] + \rho (x*y), \rho (z)] - \rho (x*yz - y*xz + xy*z)$
 \par
 $= [D_1 (x,y),\rho (z)] - \rho ([x,y,z]_1)$
 \par
 $= 0$ (by (\ref{E:2.2}))\\
 and so $(\rho ,D, \theta)$ verifies (R21). Likewise we have
 \par
 $3(\theta (x,y*z) - \rho (y)\theta (x,z) + \rho (z)\theta (x,y) + D(y,z)\rho (x) - \rho (y*z)\rho (x))$
 \par
 $= 3\theta (x,y*z) - \rho (y)(3\theta (x,z)) + \rho (z)(3\theta (x,y)) + 3D(y,z)\rho (x) - 3\rho (y*z)\rho (x)$
 \par
 $= {\theta}_2 (x,y*z) - \rho (y){\theta}_2 (x,z) + \rho (z){\theta}_2 (x,y) + D_2(y,z)\rho (x) - 3\rho (y*z)\rho (x)$
 \par
 $= \rho (x)\rho (y*z) + \rho (y*z)\rho (x) - \rho (x*yz)- \rho (y)\rho (x)\rho (z) - \rho (y)\rho (z)\rho (x) + \rho (y)\rho (x*z)$
 \par
 $+ \rho (z)\rho (x)\rho (y) + \rho (z)\rho (y)\rho (x) - \rho (z)\rho (x*y)$
 \par
 $= \{ \rho (x), \rho (y*z)\} - \rho (z)\rho (x*y) + \rho (y)\rho (x*z)$
 \par
 $- (\rho (x*yz) - \rho (z) \{ \rho (x), \rho (y)\} + \rho (y) \{ \rho (x), \rho (z)\})$
 \par
 $= 0$ (by (\ref{E:2.1}))\\
 and so $(\rho ,D, \theta)$ verifies (R22). This completes the proof.
\end{proof}
As a specific application of Proposition \ref{Pr2.7} above, we consider the following example.
\begin{example}\label{Ex2.8}
 Let $M$ be a $4$-dimensional vector space over
 $\mathbb{K}$ with basis $\{e_1, e_2, e_3, e_4 \}$. If define on $(M,*)$ the nonzero products as
 \par
 $e_1 * e_2 = - e_2$, $e_1 * e_3 = - e_3$, $e_1 * e_4 = e_4$,
 \par
 $e_2 * e_3 = 2e_4$,
 \par
 $e_i * e_j = - e_j * e_i$, $i \neq j$,\\
 then $(M,*)$ is a Maltsev algebra \cite{Sag} (since $char \; \mathbb{K} = 0$, this is the unique $4$-dimensional Maltsev algebra over $\mathbb{K}$). Let $M_0$ be the space spanned by $\{e_1, e_2\}$ and $V$ the one spanned by $\{e_3, e_4 \}$. It is clearly seen that $(M_0, *)$ is a subalgebra of $(M,*)$ and $(V,*)$ is an ideal of
 $(M,*)$. The $2$-dimensional Bol algebra $(M_0,*,[, ,])$ associated with $(M_0, *)$ is given by the following nonzero products:
\par
$e_1 * e_2 = - e_2$,
\par
$[e_1, e_2, e_1] = -e_2$,
\par
$e_i * e_j = - e_j * e_i$, $[e_i, e_j, e_k] = - [e_j, e_i, e_k]$, $i \neq j$.
\par
Now let $\rho$ be a representation of $(M_0,*)$ into the space $V$ induced by the adjoint representation of $(M,*)$. The application of Proposition \ref{Pr2.7} gives the representation of the Bol algebra $(M_0,*,[, ,])$ into the space $V$ as follows:
\par
$\rho (e_1)(e_3) = -e_3$, $\rho (e_1)(e_4) = e_4$,
\par
$\rho (e_2)(e_3) = 2e_4$, $\rho (e_2)(e_4) = 0$,
\par
$\theta (e_1, e_2)(e_k)=0= \theta (e_2,e_1)(e_k)$, $D(e_1, e_2) (e_k)=0$, $k=3,4$.
\end{example}
The result next gives a characterization of a representation of Bol algebras (we omit 
its proof since it is similar to the one of Theorem  \ref{Th3.4} in section 3).
\begin{proposition}\label{Pr2.9}
Let $(B,*,[ , , ])$ be a Bol algebra and $V$ a vector space. Let $\rho : B \rightarrow End (V)$ be a linear map, and
$D, \theta : B \times B \rightarrow End (V)$ bilinear maps. Then $(\rho ,D, \theta)$ is
a representation of $B$ in $V$ if and only if $(B \oplus V, \widetilde{*}, \{ , , \})$
is a Bol algebra, where
\par
$(x_1 + u_1) \widetilde{*} (x_2 + u_2) := x_1 * x_2 + \rho (x_1)(u_2) - \rho (x_2)(u_1)$,
\par
$\{ x_1 + u_1, x_2 + u_2, x_3 + u_3 \} := [x_1, x_2, x_3] + D(x_1, x_2)(u_3) 
- \theta (x_1, x_3)(u_2) + \theta (x_2, x_3)(u_1) $ \\
for all $x_i \in B$ and $u_i \in V$.
\end{proposition}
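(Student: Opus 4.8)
The plan is to substitute the two operations $\widetilde{*}$ and $\{ , , \}$ into each of the five defining axioms (B01), (B02), (B1), (B2), (B3) for the candidate algebra $B \oplus V$ and to observe that each axiom splits into a component valued in $B$ and a component valued in $V$. Since $(B,*,[ , , ])$ is already a Bol algebra, every $B$-valued component reduces to the corresponding axiom for $B$ and holds automatically; all the content therefore lies in the $V$-valued components. The proposition then reduces to the claim that the totality of these $V$-components, required to vanish for all arguments, is exactly equivalent to the six conditions (R1)--(R33). Because each $V$-component vanishes identically precisely when the matching representation condition holds, both implications of the biconditional are obtained simultaneously.

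I would first dispatch the skew-symmetry and cyclic axioms. For (B01) the $V$-part of $(x_1+u_1)\widetilde{*}(x_2+u_2)$ is $\rho(x_1)(u_2)-\rho(x_2)(u_1)$, which is manifestly skew under interchange of the two arguments, so (B01) imposes no condition. For (B02), skew-symmetry of $\{ , , \}$ in its first two slots forces $D$ to be skew-symmetric; this is not a separate requirement, for (R1) gives $D(x_1,x_2)=\theta(x_2,x_1)-\theta(x_1,x_2)$ and hence $D(x_2,x_1)=-D(x_1,x_2)$. The decisive case is (B1): forming the cyclic sum $\circlearrowleft_{a,b,c}\{a,b,c\}$ with $a=x_1+u_1$, $b=x_2+u_2$, $c=x_3+u_3$ and collecting the coefficients of $u_1,u_2,u_3$ yields three relations, each of which is a relabelling of $D(x_i,x_j)+\theta(x_i,x_j)-\theta(x_j,x_i)=0$. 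Thus (B1) is equivalent to (R1), and (B02) follows from it.

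Next I would treat the ternary derivation axiom (B3). Expanding $\{a_1,a_2,\{a_3,a_4,a_5\}\}$ and the three terms of its right-hand side, the $B$-component is (B3) for $B$, and the $V$-component, sorted according to which of the arguments $a_3,a_4,a_5$ carries the endomorphism being acted upon, breaks into three independent identities: the coefficient of the $V$-part of $a_5$ reproduces (R31) (a relation purely among the maps $D$), while the coefficients of the $V$-parts of $a_4$ and $a_3$ reproduce (R32) and (R33). Hence (B3) holds if and only if (R31), (R32), and (R33) all hold. This step is essentially the Lie-triple-system computation already implicit in the remark following Definition \ref{Def2.5}.

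The main obstacle is axiom (B2), both because it couples the binary and ternary operations and because of the correction term $-uv*xy=-(u*v)*(x*y)$, which injects $\rho$-compositions such as $\rho(x*y)\rho(u)$ and $\rho(u*v)\rho(x)$ into the bookkeeping. Writing the four elements as $x+p$, $y+q$, $u+r$, $v+s$, I would expand $\{x+p,\,y+q,\,(u+r)\widetilde{*}(v+s)\}$ and each of the four terms on the right-hand side of (B2), discard the automatic $B$-component, and collect the $V$-valued remainder by the element $p,q,r,s$ on which each endomorphism acts. I expect the coefficients of $s$ and of $r$ (the $V$-parts of the binary factors $v$ and $u$) each to collapse to the commutator identity (R21), with the terms $\rho([x,y,u])$, $\theta(u,x*y)$, and $\rho(x*y)\rho(u)$ tracing back respectively to the first term, the term $\{u+r,v+s,(x+p)\widetilde{*}(y+q)\}$, and the correction term; and the coefficients of $q$ and of $p$ (the $V$-parts of $y$ and $x$) each to collapse to (R22). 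The only real difficulty is combinatorial---tracking signs and the argument acted upon---so I would organize the work as a table listing, for each of the five terms of (B2), its contribution on each of $p,q,r,s$, and then sum column by column. Verifying that the four columns yield exactly (R21) (twice) and (R22) (twice) completes the equivalence and hence the proposition.
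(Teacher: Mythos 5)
Your overall strategy---expand each Bol axiom for $B \oplus V$, discard the $B$-valued component (which is just the corresponding axiom for $B$), and match the $V$-valued components with (R1)--(R33)---is essentially the paper's own route: the paper omits the proof of this proposition, referring to Theorem \ref{Th3.4}, whose proof treats the ternary axioms via the Lie-triple-system result of \cite{Zh} and verifies (B2) by exactly this kind of expansion, with the collected terms collapsing onto (R21) and (R22). Your handling of (B01), (B02), (B1) is correct (in particular, (B1) $\Leftrightarrow$ (R1), with the skew-symmetry of $D$ needed for (B02) a consequence), and your (B2) table is also correct: the coefficients of the $V$-parts of the two binary factors give (R21) twice, and the coefficients of the $V$-parts of the first two arguments give (R22) twice.

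There is, however, a concrete error in your decomposition of (B3). The $V$-component of (B3) is indexed by the $V$-parts of \emph{all five} arguments, not only $a_3, a_4, a_5$: the left-hand side $\{ a_1, a_2, \{ a_3,a_4,a_5 \} \}$ contains the terms $-\theta (x_1, [y_1,y_2,y_3])(u_2) + \theta (x_2, [y_1,y_2,y_3])(u_1)$, and the three right-hand terms contain $\theta$'s and $D$'s acting on $-\theta (x_1,y_i)(u_2) + \theta (x_2,y_i)(u_1)$. Collecting coefficients, one finds: the $V$-part of $a_5$ gives (R31); the $V$-parts of $a_4$ \emph{and} $a_3$ each give an instance of (R32) (for the coefficient of $v_1$ the resulting identity is $[D(x_1,x_2), \theta (y_2,y_3)] = \theta ([x_1,x_2,y_2],y_3) + \theta (y_2,[x_1,x_2,y_3])$, which is (R32), not (R33)); and (R33) appears \emph{only} as the coefficient of $u_2$ (resp.\ $u_1$), namely $\theta (x_1,[y_1,y_2,y_3]) = \theta (y_2,y_3)\theta (x_1,y_1) - \theta (y_1,y_3)\theta (x_1,y_2) + D(y_1,y_2)\theta (x_1,y_3)$. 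As you have organized it (``sorted according to which of the arguments $a_3,a_4,a_5$ carries the endomorphism being acted upon''), the computation would produce (R31) and two copies of (R32) but would never produce (R33); hence the ``only if'' direction could not recover (R33) from (B3), and in the ``if'' direction the identities sitting on $u_1, u_2$ would go unverified. The repair is mechanical---collect the coefficients of $u_1$ and $u_2$ as well, exactly as you correctly did for (B2), where $p$ and $q$ carry (R22)---but as written the (B3) step does not establish the claimed equivalence.
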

The Bol algebra $(B \oplus V, \widetilde{*}, \{ , , \})$ is called the \textit{semidirect product} of the Bol 
algebra $B$ with its $B$-module $V$. The following useful property of representation maps follows from Definition \ref{Def2.5}.
\begin{lemma}\label{Lem2.10}
Let $(\rho ,D, \theta)$ be a representation of a Bol algebra. Then
\begin{equation}\label{E:2.3}
 [{\Delta}_{D, \rho} (x_1 , x_2), {\Delta}_{D, \rho} (y_1 , y_2)] = 
 {\Delta}_{D, \rho} ([x_1 , x_2, y_1], y_2) + {\Delta}_{D, \rho} (y_1, [x_1 , x_2, y_2])
 - {\Delta}_{D, \rho} (y_1 * y_2, x_1 * x_2)
\end{equation}
for all $x_i, y_i \in B$, where the operator
${\Delta}_{D, \rho}$ is defined as
${\Delta}_{D, \rho} (u,v) := D(u, v) - \rho (u*v)$.
\end{lemma}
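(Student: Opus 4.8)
The plan is to expand the left-hand commutator by bilinearity of $\Delta_{D,\rho}$, reduce each resulting piece through the operator axioms (R21), (R31), (R1), and then match the outcome against the expanded right-hand side with the help of the Bol-algebra compatibility (B2). Before computing anything I would record one structural fact that turns out to be decisive: by (R1) we have $D(u,v) = \theta(v,u) - \theta(u,v)$, so $D(v,u) = -D(u,v)$, i.e. $D$ is skew-symmetric. Writing $a := x_1*x_2$ and $b := y_1*y_2$ and substituting $\Delta_{D,\rho}(x_1,x_2) = D(x_1,x_2) - \rho(a)$ and $\Delta_{D,\rho}(y_1,y_2) = D(y_1,y_2) - \rho(b)$, the left-hand side splits into four commutators:
\[
[D(x_1,x_2),D(y_1,y_2)] - [D(x_1,x_2),\rho(b)] - [\rho(a),D(y_1,y_2)] + [\rho(a),\rho(b)].
\]

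To the first term I would apply (R31) directly, obtaining $D([x_1,x_2,y_1],y_2) + D(y_1,[x_1,x_2,y_2])$, which already reproduces the two $D$-contributions of the claimed right-hand side. To the two mixed commutators I would apply (R21): once with third argument $b$, giving $\rho([x_1,x_2,b]) - \theta(b,a) + \rho(a)\rho(b)$, and once with the roles of $(x_1,x_2)$ and $(y_1,y_2)$ interchanged and third argument $a$, giving $\rho([y_1,y_2,a]) - \theta(a,b) + \rho(b)\rho(a)$. The fourth term is the plain operator commutator $\rho(a)\rho(b) - \rho(b)\rho(a)$. Here the first crucial cancellation takes place: the four operator-product terms $\pm\rho(a)\rho(b)$ and $\pm\rho(b)\rho(a)$ annihilate in pairs, and the two surviving $\theta$-terms $\theta(b,a) - \theta(a,b)$ collapse to $D(a,b) = D(x_1*x_2,y_1*y_2)$ by (R1). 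At this stage the left-hand side has been reduced to
\[
D([x_1,x_2,y_1],y_2) + D(y_1,[x_1,x_2,y_2]) + D(x_1*x_2,y_1*y_2) - \rho([x_1,x_2,y_1*y_2]) + \rho([y_1,y_2,x_1*x_2]).
\]

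It then remains to check that this equals the expansion of the right-hand side, which after substituting $\Delta_{D,\rho}(u,v) = D(u,v) - \rho(u*v)$ reads $D([x_1,x_2,y_1],y_2) + D(y_1,[x_1,x_2,y_2]) - D(y_1*y_2,x_1*x_2)$ together with the $\rho$-terms $-\rho([x_1,x_2,y_1]*y_2) - \rho(y_1*[x_1,x_2,y_2]) + \rho((y_1*y_2)*(x_1*x_2))$. The $D$-blocks agree because $D(x_1*x_2,y_1*y_2) = -D(y_1*y_2,x_1*x_2)$ by the skew-symmetry noted at the outset. The $\rho$-blocks are reconciled by invoking the Bol-algebra axiom (B2) with $(x,y,u,v) = (x_1,x_2,y_1,y_2)$, which rewrites $[x_1,x_2,y_1*y_2]$ as $[x_1,x_2,y_1]*y_2 + y_1*[x_1,x_2,y_2] + [y_1,y_2,x_1*x_2] - (y_1*y_2)*(x_1*x_2)$; pushing this identity through the linear map $\rho$ makes every remaining $\rho$-term cancel termwise.

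The only genuinely non-formal input is (B2); everything else is bookkeeping with (R1), (R21), (R31), and the axioms (R22), (R32), (R33) are not needed. I expect the main obstacle to be organizational rather than conceptual, namely keeping the signs straight across the four commutators and the two applications of (R21) with their argument swaps. The two conceptual keys are the skew-symmetry of $D$ coming from (R1) and the observation that (B2) is precisely the compatibility identity required to cancel the ternary--binary $\rho$-terms.
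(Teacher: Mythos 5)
Your proof is correct and is essentially the paper's own argument: both rest on (R31), two applications of (R21) with third arguments $y_1 * y_2$ and $x_1 * x_2$, the conversion of the surviving $\theta$-difference into a $D$-term via (R1), and a single substitution of (B2) under $\rho$. The only difference is organizational: the paper pushes (B2) through (R21) at the outset (its equations (\ref{E:2.4})--(\ref{E:2.7})) and then subtracts the result from (R31), whereas you expand $[{\Delta}_{D,\rho}(x_1,x_2), {\Delta}_{D,\rho}(y_1,y_2)]$ into four commutators first and invoke (B2) only at the final matching step.
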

\begin{proof}
 From (R21) we have
 \par
 $[D (x_1 , x_2), \rho (y_1 * y_2)] = \rho ([x_1, x_2, y_1 * y_2]) - \theta (y_1 * y_2, x_1 * x_2)
+ \rho (x_1 * x_2)\rho (y_1 * y_2)$\\
i.e., by (B2),
\begin{equation}\label{E:2.4}
\begin{aligned}
&[D (x_1 , x_2), \rho (y_1 * y_2)] \\
&= \rho ([x_1, x_2, y_1] * y_2) + \rho (y_1 * [x_1, x_2, y_2]) + \rho ([y_1, y_2, x_1 * x_2]) \\
 &- \rho (y_1 y_2 * x_1  x_2) - \theta (y_1 * y_2, x_1 * x_2)
 + \rho (x_1 * x_2)\rho (y_1 * y_2).\\
\end{aligned}
\end{equation}
Likewise we have
\begin{equation}\label{E:2.5}
\begin{aligned}
&[D (y_1 , y_2), \rho (x_1 * x_2)] \\
&= \rho ([y_1, y_2, x_1] * x_2) + \rho (x_1 * [y_1, y_2, x_2]) + \rho ([x_1, x_2, y_1 * y_2]) \\
 &- \rho (x_1 x_2 * y_1  y_2) - \theta (x_1 * x_2, y_1 * y_2)
 + \rho (y_1 * y_2)\rho (x_1 * x_2).\\
\end{aligned}
\end{equation}
Subtracting memberwise (\ref{E:2.5}) from (\ref{E:2.4}), we get
\begin{equation}\label{E:2.6}
\begin{aligned}
 &[D (x_1 , x_2), \rho (y_1 * y_2)] + [\rho (x_1 * x_2), D (y_1, y_2)]\\
 &= [\rho (x_1 * x_2), \rho (y_1 * y_2)] + \rho ([x_1, x_2, y_1] * y_2) + \rho (y_1 * [x_1, x_2, y_2])
 - \rho (y_1 y_2 * x_1  x_2) \\
 &+ \{ \rho ([y_1, y_2, x_1 * x_2]) - \rho ([y_1, y_2, x_1] * x_2) - \rho (x_1 * [y_1, y_2, x_2])
 - \rho ([x_1, x_2, y_1 * y_2])\\
 &+ \rho (x_1 x_2 * y_1  y_2) \} \\
 &- \theta (y_1 * y_2, x_1 * x_2) + \theta (x_1 * x_2, y_1 * y_2)
\end{aligned}
 \end{equation}
(note that the expression in $\{ \cdots \}$ is zero by (B2)). Now observe that (R1) implies that
$D(u,v) = \theta (v,u) - \theta (u,v)$. Thus , in
(\ref{E:2.6}), we have
$- \theta (y_1 * y_2, x_1 * x_2) + \theta (x_1 * x_2, y_1 * y_2) = D (y_1 * y_2, x_1 * x_2)$ 
and so (\ref{E:2.6}) now reads as
\begin{equation}\label{E:2.7}
\begin{aligned}
 &[D (x_1 , x_2), \rho (y_1 * y_2)] + [\rho (x_1 * x_2), D (y_1, y_2)]\\
 &= [\rho (x_1 * x_2), \rho (y_1 * y_2)] + \rho ([x_1, x_2, y_1] * y_2) + \rho (y_1 * [x_1, x_2, y_2])
 - \rho (y_1 y_2 * x_1  x_2)\\
 &+ D (y_1 * y_2, x_1 * x_2).
\end{aligned}
\end{equation}
Subtracting memberwise (\ref{E:2.7}) from (R31) and next rearranging terms, we obtain (\ref{E:2.3}).
\end{proof}
\par
From (\ref{E:2.3}) we get that the vector space spanned by all ${\sum}_{i,j} {\Delta}_{D, \rho} (u_i , u_j)$ is a Lie subalgebra of $End (V)$.
\par
An $(n,n+1)$-cohomology group for Lie-Yamaguti algebras is constructed in \cite{Y5}. A similar
construction could be carried out over Bol algebras, but this remains an open problem. However, because of applications in a study
of deformations and abelian extensions of Bol algebras (see sections 3 and 4 below), we
restrict ourself to a construction of $(2,3)$-cohomology groups for this class of algebras.
\begin{definition}\label{Def2.11}
Let $B$ be a Bol algebra, $V$ a $B$-module and $(\rho, D, \theta)$
a representation of $B$. Let $C^2 (B,V)$ be the space of bilinear maps $\nu : B \times B \rightarrow V$
such that $\nu (x_1, x_2) = - \nu (x_2, x_1)$ and $C^3 (B,V)$ be the space of trilinear maps 
$\omega : B \times B \times B \rightarrow V$ such that $\omega (x_1, x_2, x_3) = - \omega (x_2, x_1, x_3)$
for all $x_i \in B$. The pair $(\nu, \omega) \in C^2 (B,V) \times C^3 (B,V)$ is called a
$(2,3)$-cocycle if, for all $x_i, y_i \in B$, the following conditions hold:
\par
$\bullet$ (CC1) ${\circlearrowleft}_{x_1, x_2, x_3} \omega (x_1, x_2, x_3) = 0$,
\par
$\bullet$ (CC2) $\omega (x_1, x_2, y_1 * y_2) + D(x_1, x_2) \nu (y_1, y_2)$
\par
\hspace{1.5cm}$= \omega (y_1, y_2, x_1 * x_2) + D(y_1, y_2) \nu (x_1, x_2)
+ \nu ([x_1, x_2, y_1], y_2) + \nu (y_1, [x_1, x_2, y_2])$
\par
\hspace{1.5cm}$+ \rho (y_1) \omega (x_1, x_2, y_2) - \rho (y_2) \omega (x_1, x_2, y_1)
+ \rho (x_1 * x_2) \nu (y_1, y_2) - \rho (y_1 * y_2) \nu (x_1, x_2)$ 
\par
\hspace{1.5cm}$- \nu (y_1 * y_2, x_1 * x_2)$,
\par
$\bullet$ (CC3) $\omega (x_1, x_2, [y_1, y_2, y_3]) + D(x_1, x_2) \omega (y_1, y_2, y_3)$
\par
\hspace{1.5cm}$= \omega ([x_1, x_2, y_1], y_2, y_3) + \omega (y_1, [x_1, x_2, y_2], y_3)
+ \omega (y_1, y_2, [x_1, x_2, y_3])$
\par
\hspace{1.5cm}$+ D(y_1, y_2) \omega (x_1, x_2, y_3) + \theta (y_2, y_3) \omega (x_1, x_2, y_1)
- \theta (y_1, y_3) \omega (x_1, x_2, y_2)$.
\end{definition}
The vector space spanned by all $(2,3)$-cocycles will be denoted by $Z^2 (B,V) \times Z^3 (B,V)$.
\par
The notion of pseudoderivation with companion as given above is slightly generalized as follows.
\begin{definition}\label{Def2.12}
Let $B$ be a Bol algebra, $V$ a $B$-module and $(\rho, D, \theta)$
a representation of $B$. A linear map $f:B \rightarrow V$ is called a pseudoderivation
with companion $\chi \in V$ of $B$ into $V$ with respect to the representation  $(\rho, D, \theta)$ 
if, for all $x_i \in B$,
\par
$f(x_1 * x_2) = \rho (x_1) f(x_2) - \rho (x_2) f(x_1) +  {\Delta}_{D, \rho} (x_1 , x_2)(\chi)$,
\par
$f([x_1, x_2, x_3]) = \theta (x_2, x_3) f(x_1) - \theta (x_1, x_3) f(x_2)
+ D(x_1, x_2) f(x_3)$.
\end{definition}
Observe that, in case of the adjoint representation of $B$, $f=\delta (x,y)$ is a pseudoderivation
with companion $x*y$ of the Bol algebra $B$ (as in Definition \ref{Def2.4}). Perhaps this notion of
pseudoderivation with respect to a representation could help to generalize some aspects of
the general theory of Bol algebras as initiated in \cite{M1, S, SM}.
\begin{definition}\label{Def2.13}
Let $B$ be a Bol algebra, $V$ a $B$-module and $(\rho, D, \theta)$
a representation of $B$. Then $(\nu, \omega) \in C^2 (B,V) \times C^3 (B,V)$ is called a 
$(2,3)$-coboundary if there is a pair $(f, \chi)$, where $f:B \rightarrow V$ is a linear
map and $\chi \in V$ such that
\par
$\bullet$ (BB1) $\nu (x_1, x_2) = \rho (x_1) f(x_2) - \rho (x_2) f(x_1) 
+  {\Delta}_{D, \rho} (x_1 , x_2)(\chi) - f(x_1 * x_2)$,
\par
$\bullet$ (BB2) $\omega (x_1, x_2, x_3) = \theta (x_2, x_3) f(x_1) - \theta (x_1, x_3) f(x_2)
+ D(x_1, x_2) f(x_3) - f([x_1, x_2, x_3])$\\
for all $x_i \in B$. The element $\chi \in V$ is called the companion of $(\nu, \omega)$.
\end{definition}
The vector space spanned by all $(2 ,3)$-coboundaries is denoted by $B^2 (B,V) \times B^3 (B,V)$. Observe
that $f$ is a pseudoderivation of $B$ into $V$ with companion $\chi$ if and only if
$(\nu, \omega) = (0,0)$. The definition of a $(2,3)$-coboundary with companion as above, as well as the operator ${\Delta}_{D, \rho}$ (see Lemma \ref{Lem2.10}), is justified by the following result (see also Proposition \ref{Pr4.5} in section 4).
\begin{proposition}\label{Pr2.14}
$B^2 (B,V) \times B^3 (B,V)$ is contained in
$Z^2 (B,V) \times Z^3 (B,V)$.
\end{proposition}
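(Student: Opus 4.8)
The plan is to verify directly that any pair $(\nu,\omega)$ produced from (BB1)--(BB2) by some $(f,\chi)$ satisfies the three cocycle conditions (CC1)--(CC3). I would substitute the explicit expressions for $\nu$ and $\omega$ into each condition and reduce the resulting identity to a combination of the representation axioms (R1)--(R33), the defining identities (B1)--(B3), and equation (\ref{E:2.3}) of Lemma \ref{Lem2.10}. Conceptually this is a ``$d^2=0$'' statement: since $(\nu,\omega)=(0,0)$ precisely when $f$ is a pseudoderivation with companion $\chi$, the content is that the obstruction $(\nu,\omega)$ to $f$ being such a pseudoderivation always lands in the space of cocycles.

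For (CC1) I would form the cyclic sum of $\omega$. The terms $f([x_1,x_2,x_3])$ sum to $f\!\left(\circlearrowleft_{x_1,x_2,x_3}[x_1,x_2,x_3]\right)=0$ by (B1), while the remaining operator terms, after rewriting $D(u,v)=\theta(v,u)-\theta(u,v)$ by (R1), cancel in pairs under the cyclic permutation. Thus (CC1) holds with no further input.

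Condition (CC3) is purely ternary (neither $*$ nor $\chi$ enters), so here I would substitute only the (BB2)-expression for $\omega$ into both sides. The terms in which $f$ is applied to an iterated bracket collapse by the Lie-triple-type identity (B3), and the terms carrying the operators $D$ and $\theta$ recombine by means of the ``derivation'' axioms (R31), (R32) together with (R33), which say exactly that $D(x_1,x_2)$ differentiates the products $D$ and $\theta$ and that $\theta$ satisfies (R33). This step amounts to saying that (BB2) produces a cocycle for the underlying Lie triple system, and I expect it to go through routinely.

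The hard part will be (CC2), the mixed condition, since it involves the binary operation, the ternary operation, and the companion $\chi$ all at once. Here I would split the verification into its $\chi$-dependent and its $f$-dependent parts. The $\chi$-part comes only from the terms $\Delta_{D,\rho}(x_1,x_2)(\chi)$ sitting inside the various $\nu$'s; collecting them and using $\Delta_{D,\rho}(u,v)=D(u,v)-\rho(u*v)$ to absorb the $\rho(x_1*x_2)$ and $\rho(y_1*y_2)$ contributions into commutators, the $\chi$-part reduces exactly to equation (\ref{E:2.3}) of Lemma \ref{Lem2.10} evaluated at $\chi$ --- which is precisely why the operator $\Delta_{D,\rho}$ and the companion are built into (BB1). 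The $f$-part is then checked by recombining the $f$-images of the binary and ternary products through the compatibility identity (B2), together with (R21) and (R22). The care required here stems from the extra term $-\,uv*xy$ in (B2) (the feature that makes the ternary operation fail to be a derivation), so tracking exactly those terms is the main technical obstacle; once both the $\chi$-part and the $f$-part close, (CC2) follows and the proof is complete.
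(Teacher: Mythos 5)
Your proposal is correct and follows essentially the same route as the paper's own proof: a direct verification of (CC1)--(CC3), with (CC1) disposed of by (R1) and (B1), (CC3) by (B3) together with (R31)--(R33), and (CC2) split exactly as the paper does into the companion part --- which collapses to equation (\ref{E:2.3}) of Lemma \ref{Lem2.10} applied to $\chi$ --- and the $f$-part, handled by (B2), (R21), and (R22). Nothing essential is missing; the paper simply carries out in full the term-by-term bookkeeping you outline.
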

\begin{proof}
 Assume that we are given a $(\nu, \omega) \in C^2 (B,V) \times C^3 (B,V)$ satisfying (BB1)
 and (BB2). We need to check that $(\nu, \omega)$ also satisfies (CC1)-(CC3).
 \par
 For (CC1) we have
 \par
 ${\circlearrowleft}_{x_1, x_2, x_3} \omega (x_1, x_2, x_3) = {\circlearrowleft}_{x_1, x_2, x_3}
 [\underbrace{D(x_1 , x_2) + \theta (x_1 , x_2) - \theta (x_2 , x_1)}_{(R1)}] f(x_3)$
 \par
 \hspace{3.5cm}$- f(\underbrace{{\circlearrowleft}_{x_1, x_2, x_3} [x_1, x_2, x_3]}_{(B1)})$\\
 and so (CC1) holds for $(\nu, \omega)$ by (R1) and (B1).
 \par
 For (CC2) we have
 \par
 $\omega (x_1, x_2, y_1 * y_2) + D(x_1, x_2) \nu (y_1, y_2) - D(y_1, y_2) \nu (x_1, x_2)$
 \par
 $- \nu ([x_1, x_2, y_1], y_2) - \nu (y_1, [x_1, x_2, y_2]) 
 + \rho (y_2) \omega (x_1, x_2, y_1) - \rho (y_1) \omega (x_1, x_2, y_2)$
 \par
 $- \omega (y_1, y_2, x_1 * x_2) - \rho (x_1 * x_2) \nu (y_1, y_2) + \rho (y_1 * y_2) \nu (x_1, x_2)
 + \nu (y_1 * y_2, x_1 * x_2)$
 \par
 $=\underbrace{-f([x_1, x_2, y_1 * y_2]) + f([x_1, x_2, y_1 ]* y_2) + f(y_1 * [x_1, x_2, y_2])
 + f([y_1, y_2, x_1 * x_2])}_{(B2)}$
 \par
 $\underbrace{- f(y_1 y_2 * x_1 x_2)}$
 \par
 $\underbrace{- D(x_1, x_2)\rho (y_2) f(y_1) + \rho ([x_1, x_2, y_2])f(y_1) 
 - \theta (y_2, x_1 * x_2)f(y_1) + \rho (y_2) D(x_1, x_2)f(y_1)}_{(R21)}$
 \par
 $\underbrace{+ \rho (x_1 * x_2) \rho (y_2) f(y_1)}$
 \par
 $\underbrace{+ D(x_1, x_2)\rho (y_1) f(y_2) - \rho ([x_1, x_2, y_1])f(y_2)
 - \rho (y_1) D[x_1, x_2) f(y_2) + \theta (y_1, x_1 * x_2)f(y_2)}_{(R21)}$
 \par
 $\underbrace{- \rho (x_1 * x_2) \rho (y_1) f(y_2)}$
 \par
 $\underbrace{+ \theta (x_2, y_1 * y_2)f(x_1) + D(y_1, y_2)\rho (x_2) f(x_1)
 + \rho (y_2) \theta (x_2, y_1) f(x_1) - \rho (y_1) \theta (x_2, y_2) f(x_1)}_{(R22)}$
 \par
 $\underbrace{- \rho (y_1 * y_2) \rho (x_2) f(x_1)}$
 \par
 $\underbrace{- \theta (x_1, y_1 * y_2)f(x_2) - D(y_1, y_2)\rho (x_1) f(x_2)
 - \rho (y_2) \theta (x_1, y_1) f(x_2) + \rho (y_1) \theta (x_1, y_2) f(x_2)}_{(R22)}$
 \par
 $\underbrace{+ \rho (y_1 * y_2) \rho (x_1) f(x_2)}$
 \par
 $+ {\Big (}\underbrace{ [D(x_1, x_2), D(y_1, y_2)] + [\rho (y_1 * y_2), D(x_1, x_2)] 
 + [D(y_1, y_2), \rho (x_1 * x_2)] }_{(\ref{E:2.3})}$
 \par
 $\underbrace{- D([x_1, x_2, y_1], y_2) 
 + \rho ([x_1, x_2, y_1] * y_2) - D(y_1, [x_1, x_2, y_2])
 + \rho (y_1 * [x_1, x_2, y_2])}$
 \par
 $\underbrace{+ [\rho (x_1 * x_2), \rho (y_1 * y_2)] + D(y_1 * y_2, x_1 * x_2)
 - \rho (y_1 y_2 * x_1  x_2)} {\Big )} (\chi)$\\
 and so (CC2) holds for $(\nu, \omega)$ by (B2), (R21), (R22) and (\ref{E:2.3}).
 \par
 For (CC3) we have
 \par
 $\omega (x_1, x_2, [y_1, y_2, y_3]) - \omega ([x_1, x_2, y_1], y_2, y_3) 
 - \omega (y_1, [x_1, x_2, y_2], y_3) - \omega (y_1, y_2, [x_1, x_2, y_3])$
 \par
 $+  D(x_1, x_2) \omega (y_1, y_2, y_3) - \theta (y_2, y_3) \omega (x_1, x_2, y_1)
+ \theta (y_1, y_3) \omega (x_1, x_2, y_2) - D(y_1, y_2) \omega (x_1, x_2, y_3)$
\par
$= {\Big (} \underbrace{\theta (x_2, [y_1, y_2, y_3]) - \theta (y_2, y_3) \theta (x_2, y_1)
+ \theta (y_1, y_3) \theta (x_2, y_2) - D (y_1, y_2) \theta (x_2, y_3)}_{(R33)} {\Big )}
f(x_1)$
\par
$+ {\Big (} \underbrace{- \theta (x_1, [y_1, y_2, y_3]) + \theta (y_2, y_3) \theta (x_1, y_1)
- \theta (y_1, y_3) \theta (x_1, y_2) + D (y_1, y_2) \theta (x_1, y_3)}_{(R33)} {\Big )}
f(x_2)$
\par
$+ {\Big (} \underbrace{- \theta ([x_1, x_2, y_2], y_3) - \theta (y_2, [x_1, x_2, y_3])
+ [D (x_1, x_2), \theta (y_2, y_3)] }_{(R32)} {\Big )} f(y_1)$
\par
$+  {\Big (} \underbrace{ \theta ([x_1, x_2, y_1], y_3) + \theta (y_1, [x_1, x_2, y_3])
- [D (x_1, x_2), \theta (y_1, y_3)] }_{(R32)} {\Big )} f(y_2)$
\par
$+ {\Big (} \underbrace{- D ([x_1, x_2, y_1], y_2) - D (y_1, [x_1, x_2, y_2])
+ [D (x_1, x_2), D (y_1, y_2)] }_{(R31)} {\Big )} f(y_3)$
\par
$+ f(\underbrace{ - [x_1, x_2, [y_1, y_2, y_3]] +  [[x_1, x_2, y_1], y_2, y_3] 
+ [y_1, [x_1, x_2, y_2], y_3] + [y_1, y_2, [x_1, x_2, y_3]]}_{(B3)})$\\
and so (CC3) holds for $(\nu, \omega)$ by (B3), (R31), (R32) and (R33). This
completes the proof.
\end{proof}
Consistent with Proposition \ref{Pr2.14} above, we can now give the following definition.
\begin{definition}\label{Def2.15}
The $(2,3)$-cohomology group of a Bol algebra $B$ with coefficients
in the $B$-module $V$ is the quotient space $H^2 (B,V) \times H^3 (B,V)
:= (Z^2 (B,V) \times Z^3 (B,V)) / (B^2 (B,V) \times B^3 (B,V))$.
\end{definition}
\section{Deformations}

\subsection{Infinitesimal deformations.}
In this subsection we study infinitesimal deformations of Bol algebras. For this purpose
we need the notion of a ``Bol algebra of deformation type''.
\begin{definition}\label{Def3.1}
A Bol algebra of deformation type is a quadruple $(\mathcal{B}, \mu, \nu, \omega)$,
where $\mathcal{B}$ is a vector space, $\mu, \nu : \mathcal{B} \times \mathcal{B} \rightarrow \mathcal{B}$
are bilinear maps and $\omega : \mathcal{B} \times \mathcal{B} \times \mathcal{B} \rightarrow \mathcal{B}$
is a trilinear map such that
\par
(B01') $\nu (x_1, x_2) = - \nu (x_2, x_1)$,
\par
(B02') $\mu (x_1, x_2) = - \mu (x_2, x_1)$,
\par
(B03') $\omega (x_1, x_2, x_3) = - \omega (x_2, x_1, x_3)$,
\par
(B1') ${\circlearrowleft}_{x_1, x_2, x_3}  \omega (x_1, x_2, x_3) = 0$,
\par
(B2') $\omega (x_1, x_2, \nu (y_1, y_2)) = \nu (\omega (x_1, x_2, y_1), y_2)
+ \nu (y_1, \omega (x_1, x_2, y_2)) + \omega (y_1, y_2, \nu (x_1, x_2))$
\par
\hspace{4.0cm}$- \nu (\nu (y_1, y_2), \mu (x_1, x_2)) - \nu (\mu (y_1, y_2), \nu (x_1, x_2))$
\par
\hspace{4.0cm}$-\mu (\nu (y_1, y_2),\nu (x_1, x_2))$
\par
(B3') $\omega (x_1, x_2, \omega (y_1, y_2, y_3)) = \omega (\omega (x_1, x_2, y_1), y_2, y_3)
+ \omega (y_1 , \omega (x_1, x_2, y_2), y_3)$
\par
\hspace{4.5cm}$+ \omega (y_1, y_2, \omega (x_1, x_2, y_3))$ \\
for all $x_i, y_i \in \mathcal{B}$. Then the triple $(\mu, \nu, \omega)$ is said to define a Bol algebra of deformation type.
\end{definition}
The conditions (B03'), (B1'), and (B3') say that $(\mathcal{B},\omega)$ is a Lie triple system. Therefore a Bol algebra of deformation type is a quadruple $(\mathcal{B}, \mu, \nu, \omega)$ such that $(\mathcal{B}, \omega)$ is a Lie triple system and (B01'), (B02'), (B2') hold. The condition (B2') could be seen as a compatibility condition.
\par
Let $B$ be a Bol algebra, $\nu : B \times B \rightarrow B$ and 
$\omega : B \times B \times B \rightarrow B$ be bilinear and trilinear maps respectively.
Consider a $t$-parametrized family of bilinear and trilinear operations:
\par
$x_1 *_t x_2 := x_1 * x_2 + t \nu (x_1, x_2)$,
\par
$[x_1, x_2, x_3]_t := [x_1, x_2, x_3] + t \omega (x_1, x_2, x_3)$,\\
$x_i \in B$.
\begin{definition}\label{Def3.2}
The pair $(\nu, \omega)$ is said to generate a $t$-parameter
infinitesimal deformation of $B$ if $B_t := (B, *_t, [ , , ]_t)$ is a Bol algebra.
\end{definition}
We can now give the following characterization of $t$-parameter infinitesimal
deformations of Bol algebras.
\begin{proposition}\label{Pr3.3}
With the notations above, $(\nu, \omega)$ generates
a $t$-parameter infinitesimal deformation of a Bol algebra $B$ if and only if
\par
(i) $(*,\nu, \omega)$ defines a Bol algebra of deformation type;
\par
(ii) $(\nu, \omega)$ is a $(2,3)$-cocycle of $B$ with coefficients in the adjoint
representation.
\end{proposition}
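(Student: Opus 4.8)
The plan is to write out the five defining identities (B01)--(B3) for the deformed operations $*_t$ and $[\,,\,]_t$, expand each side as a polynomial in $t$, and compare coefficients degree by degree. Since $(B,*,[\,,\,])$ is already a Bol algebra, every $t^0$-coefficient reproduces an axiom of $B$ and so vanishes identically; all the content sits in the higher coefficients. Both implications then follow from the same computation: if $B_t$ is a Bol algebra each $t$-coefficient must vanish, and conversely the vanishing of every coefficient is precisely the statement that $B_t$ obeys (B01)--(B3). The strategy throughout is to specialise the representation maps to the adjoint representation, $\rho(u)v=u*v$, $D(u,v)w=[u,v,w]$, $\theta(u,v)w=[w,u,v]$, so that (CC1)--(CC3) and (B1')--(B3') can be read off directly.

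First I would dispatch the identities that are linear in a single operation. Axiom (B01) has only a $t^1$-coefficient, namely $\nu(x,y)+\nu(y,x)=0$, i.e. (B01'); likewise (B02) gives (B03'), while (B02') (skewness of $\mu=*$) is automatic since $*$ is skew. Axiom (B1) is linear in $[\,,\,]_t$, so its $t^1$-coefficient is $\circlearrowleft_{x,y,z}\omega(x,y,z)=0$, which is (B1') and (CC1) at once. Next I would treat (B3): being quadratic in the ternary bracket it reaches only order $t^2$. After substituting the adjoint maps and using (B02) to rewrite, e.g., $-[\omega(x,y,v),u,w]=[u,\omega(x,y,v),w]$, the $t^1$-coefficient is exactly (CC3) and the $t^2$-coefficient is exactly (B3'). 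Thus (B3) for $B_t$ is equivalent to (CC3) together with (B3').

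The heart of the argument, and the main obstacle, is axiom (B2). It is the only identity containing a product of two binary products, the term $uv*xy=(u*v)*(x*y)$, whose deformation $(u*_t v)*_t(x*_t y)$ involves three applications of $*_t$ (two inner, one outer) and therefore reaches order $t^3$. Expanding, the $t^1$-coefficient, after inserting the adjoint maps and using skew-symmetry of $*$ (so that $\omega(x,y,u)*v=-v*\omega(x,y,u)$ and $-\nu(u,v)*(x*y)=(x*y)*\nu(u,v)$), matches (CC2) term for term, and the $t^2$-coefficient reproduces (B2') with $\mu=*$. What then remains is the single cubic coefficient $-\nu(\nu(u,v),\nu(x,y))$, produced by the outermost $\nu$ acting on the two inner $\nu$-products. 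This term has no analogue in settings where the defining identities are at most quadratic in the binary operation (as in the Lie-Yamaguti theory of \cite{ZL}); it is the genuinely new phenomenon forced by the compatibility axiom (B2). The crux is therefore to reconcile $\nu(\nu(u,v),\nu(x,y))$ with conditions (i) and (ii): I would check carefully whether its vanishing is a consequence of (B2') and the cocycle identities or whether — as seems likely, since (B2') contains only the three $t^2$-terms $\nu(\nu,\mu),\ \nu(\mu,\nu),\ \mu(\nu,\nu)$ — it must be imposed separately, in which case the operative reading of ``infinitesimal'' is that the Bol identities hold to second order in $t$, the cubic term being the leading obstruction to integrating the deformation.

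Assembling the three blocks, $B_t$ is a Bol algebra precisely when $\nu$ and $\omega$ are skew in the appropriate slots and (CC1)--(CC3) hold (this is condition (ii), that $(\nu,\omega)$ is a $(2,3)$-cocycle in the adjoint representation) and (B1'),(B2'),(B3') hold with $\mu=*$ (this is condition (i), that $(*,\nu,\omega)$ defines a Bol algebra of deformation type). Both directions of the stated equivalence are then immediate from the coefficient-by-coefficient analysis, modulo the careful bookkeeping of the cubic term in (B2) flagged above.
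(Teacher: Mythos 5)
Your proposal is correct and follows essentially the same route as the paper's proof: expand each Bol axiom for $B_t$ in powers of $t$, identify the $t^1$-coefficients with the cocycle conditions (CC1)--(CC3) and the $t^2$-coefficients with the deformation-type conditions (B1')--(B3'). The crux you flag is resolved exactly as you suspect: the vanishing of $\nu(\nu(y_1,y_2),\nu(x_1,x_2))$ is \emph{not} a consequence of (i) and (ii) (e.g., take $B$ with zero operations, $\omega=0$, and $\nu$ the bracket of $\mathfrak{sl}_2(\mathbb{K})$), and the paper's own proof simply labels this cubic term $o(t^3)$ and discards it, i.e.\ it reads the Bol identities for $B_t$ modulo $t^3$ --- the same convention under which, in the formal-deformation subsection, the cubic constraint is recorded separately as equation (\ref{E:3.26}) rather than deduced from the cocycle and deformation-type conditions.
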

\begin{proof}
 Suppose that $(\nu, \omega)$ generates a $t$-parameter infinitesimal deformation of
 the Bol algebra $B$, i.e. $B_t$ is a Bol algebra. It is easy to see that the axioms
 (B01), (B02), and (B1) for $B_t$ are equivalent to
 \begin{equation}\label{E:3.1}
 \nu (x_1, x_2) = - \nu (x_2, x_1), 
 \end{equation}
 \begin{equation}\label{E:3.2}
 \omega (x_1, x_2, x_3) = - \omega (x_2, x_1, x_3), 
 \end{equation}
 \begin{equation}\label{E:3.3}
  {\circlearrowleft}_{x_1, x_2, x_3}  \omega (x_1, x_2, x_3) = 0
 \end{equation}
respectively. The left-hand side of (B2) for $B_t$ is 
\par
$[x_1, x_2, y_1 *_t y_ 2]_t = [x_1, x_2, y_1 * y_2]
+ t \big{(} [x_1, x_2, \nu (y_1, y_2)] + \omega (x_1, x_2, y_1 * y_2) \big{)}$
\par
\hspace{3.0cm}$+ t^2 \omega (x_1, x_2, \nu (y_1, y_2))$\\
while the computation of its right-hand side gives
\par
$[x_1, x_2, y_1]_t *_t y_2 + y_1 *_t [x_1, x_2, y_2]_t + [y_1, y_2, x_1 *_t x_2]_t
- (y_1 *_t y_2) *_t (x_1 *_t x_2)$
\par
$= [x_1, x_2, y_1]* y_2 + y_1 *[x_1, x_2, y_2] + [y_1, y_2, x_1 * x_2] - (y_1 * y_2)*(x_1 * x_2)$
\par
$+ t \{ \omega (x_1, x_2, y_1) * y_2 + y_1 * \omega (x_1, x_2, y_2) + \omega (y_1, y_2, x_1 * x_2)
+ [y_1, y_2, \nu (x_1, x_2)]$
\par
\hspace{0.5cm}$+ \nu ([x_1, x_2, y_1], y_2) + \nu (y_1, [x_1, x_2, y_2]) - (y_1 * y_2) * \nu (x_1, x_2)
- \nu (y_1, y_2) * (x_1 * x_2)$
\par
\hspace{0.5cm}$- \nu (y_1 * y_2,  x_1 * x_2) \} $
\par
$+ t^2 \{ \nu (\omega (x_1, x_2, y_1), y_2) + \nu (y_1, \omega (x_1, x_2, y_2)) 
 + \omega (y_1, y_2, \nu (x_1, x_2)) - \nu (\nu (y_1, y_2), x_1 * x_2)$
 \par
 \hspace{0.5cm}$- \nu (y_1 * y_2, \nu (x_1, x_2)) - \nu (y_1, y_2) * \nu (x_1, x_2) \}$
 \par
 $-o(t^3)$,\\
 where
 $o(t^3):= t^3 \nu (\nu (y_1, y_2), \nu (x_1, x_2))$ is the term in power $t^3$.
 Therefore the axiom (B2) for $B_t$ holds if and only if
 \begin{equation}\label{E:3.4}
  \begin{aligned}
&[x_1, x_2, \nu (y_1, y_2)] + \omega (x_1, x_2, y_1 * y_2) \\
&= \nu ([x_1, x_2, y_1], y_2) + \nu (y_1, [x_1, x_2, y_2]) + [y_1, y_2, \nu (x_1, x_2)]\\
&+ \omega (x_1, x_2, y_1) * y_2 + y_1 * \omega (x_1, x_2, y_2) + \omega (y_1, y_2, x_1 * x_2)\\
&- (y_1 * y_2) * \nu (x_1, x_2)
- \nu (y_1, y_2) * ( x_1 * x_2) - \nu (y_1 * y_2,  x_1 * x_2),
 \end{aligned}
 \end{equation}
 \begin{equation}\label{E:3.5}
  \begin{aligned}
  \omega (x_1, x_2, \nu (y_1, y_2)) &= \nu (\omega (x_1, x_2, y_1), y_2) + \nu (y_1, \omega (x_1, x_2, y_2))\\
  &+ \omega (y_1, y_2, \nu (x_1, x_2)) - \nu (\nu (y_1, y_2), x_1 * x_2)\\
  &- \nu (y_1 * y_2, \nu (x_1, x_2)) - \nu (y_1, y_2) * \nu (x_1, x_2).
  \end{aligned}
 \end{equation}
Finally, proceeding as for (B2) above, one finds that the axiom (B3) for $B_t$ holds if and only if
\begin{equation}\label{E:3.7}
 \begin{aligned}
  &[x_1, x_2, \omega (y_1, y_2, y_3)] + \omega (x_1, x_2, [y_1, y_2, y_3])\\
  &= \omega ([x_1, x_2, y_1], y_2, y_3) + \omega (y_1 , [x_1, x_2, y_2], y_3) + \omega (y_1, y_2, [x_1, x_2, y_3])\\
  &+ [\omega (x_1, x_2, y_1), y_2, y_3] + [y_1 , \omega (x_1, x_2, y_2), y_3] + [y_1, y_2, \omega (x_1, x_2, y_3)],
 \end{aligned}
\end{equation}
and
\begin{equation}\label{E:3.8}
 \omega (x_1, x_2, \omega (y_1, y_2, y_3)) = \omega (\omega (x_1, x_2, y_1), y_2, y_3)
+ \omega (y_1 , \omega (x_1, x_2, y_2), y_3) + \omega (y_1, y_2, \omega (x_1, x_2, y_3)).
\end{equation}
Thus the proof is complete by (\ref{E:3.1})-(\ref{E:3.8}) since (\ref{E:3.1}), (\ref{E:3.2}),
(\ref{E:3.3}), (\ref{E:3.5}) and (\ref{E:3.8}) define a Bol algebra of deformation type while (\ref{E:3.3}), (\ref{E:3.4}) and
(\ref{E:3.7}) define a $(2,3)$-cocycle of $B$ with coefficients in the adjoint representation.
\end{proof}
\par
The following result shows that, given a Bol algebra $B$, any $(2,3)$-cocycle of $B$ as in Definition \ref{Def2.11} gives rise to a Bol structure on $B \oplus V$.
\begin{theorem}\label{Th3.4}
Let $B$ be a Bol algebra, $(V, \rho, D, \theta)$ a $B$-module and $(\nu, \omega)$
a $(2,3)$-cocycle of $B$. If define on $B \oplus V$ the operations
\begin{equation}\label{E:3.9}
 (x_1 + u_1) *_{\nu} (x_2 + u_2) := x_1 * x_2 + \nu (x_1, x_2) + \rho (x_1)(u_2) - \rho (x_2)(u_1)
\end{equation}
\begin{eqnarray}\label{E:3.10}
 [x_1 + u_1, x_2 + u_2, x_3 + u_3]_{\omega} &:=& [x_1, x_2, x_3] + \omega (x_1, x_2, x_3) + D(x_1, x_2)(u_3) \\ \nonumber
 & & -\theta (x_1, x_3)(u_2) + \theta (x_2, x_3)(u_1),
 \end{eqnarray}
where $x_i \in B$ and $u_i \in V$, then $B {\oplus}_{(\nu, \omega)} V :=
( B \oplus V, *_{\nu} , [ , , ]_{\omega})$ is a Bol algebra.
\end{theorem}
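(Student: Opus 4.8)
The plan is to verify each of the Bol axioms (B01)--(B3) of Definition \ref{Def2.1} for the operations (\ref{E:3.9}), (\ref{E:3.10}) by exploiting the direct-sum structure of $B \oplus V$. Writing $X_i = x_i + u_i$ with $x_i \in B$ and $u_i \in V$, the crucial structural observation is that the $B$-component of any product depends only on the $B$-parts of the arguments (it equals $x_1 * x_2$, resp.\ $[x_1, x_2, x_3]$), while the $V$-component is affine-linear in the $u_i$; hence in an iterated product the module elements never multiply one another. Each axiom therefore splits into a $B$-part and a $V$-part, which I would treat separately. The $B$-part of every axiom is literally the corresponding axiom for the Bol algebra $B$ and so holds automatically. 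For the $V$-part I would further separate the $u$-free contribution from the coefficient of each individual module element, since these must vanish independently.

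For the skew-symmetries this is immediate: the $V$-part of (B01) reduces to $\nu (x_1, x_2) = - \nu (x_2, x_1)$, and that of (B02) to $\omega (x_1, x_2, x_3) = - \omega (x_2, x_1, x_3)$ together with $D(x_1, x_2) = - D(x_2, x_1)$, the latter being a consequence of (R1) (which yields $D(u,v) = \theta (v,u) - \theta (u,v)$). For (B1), the $u$-free part of the cyclic sum is exactly (CC1), while the coefficient of each $u_i$ is a cyclic combination of $D$ and $\theta$ that collapses by three applications of (R1).

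The main obstacle will be (B2) and (B3), whose $V$-parts require careful bookkeeping of coefficients. For (B2) I would expand both sides of $[X_1, X_2, Y_1 *_{\nu} Y_2]_{\omega} = [X_1, X_2, Y_1]_{\omega} *_{\nu} Y_2 + Y_1 *_{\nu} [X_1, X_2, Y_2]_{\omega} + [Y_1, Y_2, X_1 *_{\nu} X_2]_{\omega} - (Y_1 *_{\nu} Y_2) *_{\nu} (X_1 *_{\nu} X_2)$ with $Y_j = y_j + v_j$, and then read off the $V$-side. The $u$-free terms reproduce precisely the cocycle condition (CC2); the coefficient of each module part $v_1, v_2$ of the $Y_j$ reduces, upon forming the commutator $[D(x_1, x_2), \rho (y_j)]$, to (R21); and the coefficient of each module part $u_1, u_2$ of the $X_i$ reduces to (R22). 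Likewise, for (B3) the $u$-free part of the $V$-component is exactly (CC3), the coefficient of the third module element $v_3$ gives (R31), the coefficients of $v_1, v_2$ give (R32), and the coefficients of $u_1, u_2$ give (R33).

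This coefficient-matching is the mirror image of the computation in the proof of Proposition \ref{Pr2.14}, where one instead collects coefficients of $f(x_i)$ and $f(y_j)$; here the module elements $u_i, v_j$ play exactly that role. I expect the calculations for (B2) and (B3) to be long but purely mechanical, with no ingredient beyond the representation axioms (R1)--(R33) and the cocycle conditions (CC1)--(CC3); in particular, unlike the coboundary computation of Proposition \ref{Pr2.14} no companion is present, so equation (\ref{E:2.3}) is not needed here.
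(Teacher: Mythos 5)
Your proposal is correct, and all of your coefficient identifications check out: for (B2) the module-free part of the $V$-component is exactly (CC2), the coefficients of $v_1,v_2$ reduce to (R21) and those of $u_1,u_2$ to (R22) (this is precisely the grouping the paper uses); for (B3) the module-free part is (CC3) and the coefficients of $v_3$, of $v_1,v_2$, and of $u_1,u_2$ give (R31), (R32), (R33) respectively; the skew-symmetry of $D$ needed for (B02) and the collapse of the cyclic sums in (B1) do follow from (R1); and your closing remark is also accurate — no companion occurs, so Lemma \ref{Lem2.10} plays no role here, in contrast with Proposition \ref{Pr2.14}. Where you diverge from the paper is in the treatment of the purely ternary axioms. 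The paper never expands (B02), (B1), (B3) at all: it observes that these three axioms say exactly that $(B\oplus V,[\,,\,,\,]_{\omega})$ is a Lie triple system, and it imports this from Lemma 4.5 of \cite{Zh} (the analogous semidirect-type construction for Lie triple systems, which uses only (R1), (R31)--(R33), (CC1), (CC3)); the direct expansion is then carried out only for (B2). Your route is fully self-contained — it in effect reproves the Lie triple system case rather than citing it — at the cost of the extra, admittedly mechanical, (B1) and (B3) computations; the paper's route is shorter and reflects its stated guiding viewpoint that a Bol algebra is a Lie triple system equipped with a compatible binary product, so that only the compatibility axiom (B2) demands genuinely new work.
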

\begin{proof}
 That $( B \oplus V, [ , , ]_{\omega})$ is a Lie triple system follows from Lemma 4.5 in \cite{Zh}. Thus we are done
 if we check (B2) for $B {\oplus}_{(\nu, \omega)} V$. We have
 \par
 $[x_1 + u_1, x_2 + u_2, (y_1 + v_1) *_{\nu} (y_2 + v_2)]_{\omega}$
 \par
 $= \underbrace{[x_1, x_2, y_1 * y_2]}_{(B2)} 
 + \underbrace{\omega (x_1, x_2, y_1 * y_2) + D(x_1, x_2) \nu (y_1, y_2)}_{(CC2)}$
 \par
 $+ \underbrace{\theta (x_2, y_1 * y_2) (u_1) - \theta (x_1, y_1 * y_2) (u_2)}_{(R22)}
 \underbrace{- D(x_1, x_2) \rho (y_2)(v_1) + D(x_1, x_2) \rho (y_1)(v_2)}_{(R21)}$\\
 while
 \par
 $[x_1 + u_1, x_2 + u_2, y_1 + v_1]_{\omega} *_{\nu} (y_2 + v_2) + (y_1 + v_1) *_{\nu}
 [x_1 + u_1, x_2 + u_2, y_2 + v_2]_{\omega}$
 \par
 $+ [y_1 + v_1, y_2 + v_2, (x_1 + u_1) *_{\nu} (x_2 + u_2)]_{\omega}$
 \par
 $- ((y_1 + v_1) *_{\nu} (y_2 + v_2)) *_{\nu} ((x_1 + u_1) *_{\nu} (x_2 + u_2))$
 \par
 $= \underbrace{[x_1, x_2, y_1]* y_2 + y_1 * [x_1, x_2, y_2] + [y_1, y_2, x_1 * x_2] - (y_1 * y_2) * (x_1 * x_2)}_{(B2)}$
 \par
 $+ \underbrace{ \nu ([x_1, x_2, y_1], y_2) + \nu (y_1, [x_1, x_2, y_2])-\rho (y_2) \omega (x_1, x_2, y_1)}_{(CC2)}$
 \par
 $+ \underbrace{\rho (y_1) \omega (x_1, x_2, y_2) + \omega (y_1, y_2, x_1 * x_2) + D(y_1, y_2) \nu (x_1, x_2)}_{(CC2)}$
 \par
 $\underbrace{- \nu (y_1 * y_2, x_1 * x_2) - \rho (y_1 * y_2) \nu (x_1, x_2) + \rho (x_1 * x_2) \nu (y_1, y_2)}_{(CC2)}$
 \par
 $+ \Big{(} \underbrace{\rho (y_1) \theta (x_2, y_2) - \rho (y_2) \theta (x_2, y_1)
 - {\Delta}_{D, \rho} (y_1 , y_2) \rho (x_2)}_{(R22)} \Big{)} (u_1)$
 \par
 $+ \Big{(} \underbrace{\rho (y_2) \theta (x_1, y_1) - \rho (y_1) \theta (x_1, y_2)
 + {\Delta}_{D, \rho} (y_1 , y_2) \rho (x_1)}_{(R22)} \Big{)} (u_2)$
 \par
 $+ \Big{(} \underbrace{-\rho (y_2) D(x_1, x_2) - \rho ([x_1, x_2, y_2] +  \theta (y_2, x_1 * x_2)
 - \rho (x_1 * x_2) \rho (y_2)}_{(R21)} \Big{)} (v_1)$
 \par
 $+ \Big{(} \underbrace{\rho (y_1) D(x_1, x_2) + \rho ([x_1, x_2, y_1] -  \theta (y_1, x_1 * x_2)
 + \rho (x_1 * x_2) \rho (y_1)}_{(R21)} \Big{)} (v_2)$.\\
 Therefore (B2) for $B {\oplus}_{(\nu, \omega)} V$ holds by (B2) for $B$, (CC2), (R21) and (R22).
\end{proof}
The proof of Theorem \ref{Th3.4} above hints a proof of Proposition \ref{Pr2.9} in section 2.
\subsection{One-parameter formal deformations.}
Let $(B,*,[, ,])$ be a Bol algebra  and $\mathbb{K}[[t]]$ the ring of power series in one variable $t$ with coefficients in
$\mathbb{K}$. Denote by $B[[t]]$ the set of power series with coefficients in the vector space
$B$.
\begin{definition}\label{Def3.5}
A $1$-parameter formal deformation of the Bol algebra $B$ is given
by a pair $(f_{t}, g_{t})$, where $f_{t}$ is a $\mathbb{K} [[t]]$-bilinear map $f_{t} : B[[t]] \times B[[t]] \rightarrow B[[t]]$,
$f_{t} := {\sum}_{i \geq 0} F_{i} t^{i}$,  and $g_{t}$ a
$\mathbb{K} [[t]]$-trilinear map $g_{t} : B[[t]] \times
B[[t]] \times B[[t]] \rightarrow B[[t]]$, $g_{t} := {\sum}_{i \geq 0} G_{i} t^{i}$, where $F_{0} := *$, $G_{0} := [ , ,]$,
and each $F_{i}$ (resp. $G_{i}$)  is a $\mathbb{K}$-bilinear map $B \times B \rightarrow B$ extended to be
$\mathbb{K} [[t]]$-bilinear (resp. $\mathbb{K}$-trilinear map $B \times B \times B \rightarrow B$ extended to be
$\mathbb{K} [[t]]$-trilinear), such that $B_{T} := (B[[t]], f_{t}, g_{t})$ is a Bol algebra over $\mathbb{K} [[t]]$. A deformation  $(f_{t}, g_{t})$ is said to be of order $l$ whenever $f_{t} := {\sum}_{i=0}^{l} F_{i} t^{i}$,
$g_{t} := {\sum}_{i=0}^{l} G_{i} t^{i}$.
\end{definition}
That $B_T$ is a Bol algebra means that, for all $x_i$, $y_i$ $\in B$, \\
\begin{equation}\label{E:3.11}
 f_t (x_1 , x_2) = - f_t (x_2 , x_1),
\end{equation}
\begin{equation}\label{E:3.12}
 g_t (x_1 , x_2, x_3) = - g_t (x_2 , x_1, x_3),
\end{equation}
\begin{equation}\label{E:3.13}
 {\circlearrowleft}_{x_1, x_2, x_3}  g_t (x_1 , x_2, x_3) = 0,
\end{equation}
\begin{equation}\label{E:3.14}
\begin{aligned}
 g_t (x_1 , x_2, f_t (y_1 , y_2)) =  &f_t (g_t (x_1 , x_2, y_1), y_2) + f_t (y_1, g_t (x_1 , x_2, y_2))\\
 &+ g_t (y_1 , y_2, f_t (x_1 , x_2))
 - f_t (f_t (y_1 , y_2, f_t (x_1 ,x_2)),
 \end{aligned}
 \end{equation}
\begin{equation}\label{E:3.15}
\begin{aligned}
 g_t (x_1 , x_2, g_t (y_1 , y_2, y_3)) =  &g_t (g_t (x_1 , x_2, y_1), y_2, y_3) + g_t (y_1, g_t (x_1 , x_2, y_2),y_3)\\
 &+ g_t (y_1 , y_2, g_t (x_1 , x_2, y_3)).
 \end{aligned}
 \end{equation}
The relations (\ref{E:3.11})-(\ref{E:3.15}) are equivalent to the following set of equations (which are called the deformation equations of a Bol algebra):\\
\begin{equation}\label{E:3.16}
 F_k (x_1 , x_2) = - F_k (x_2 , x_1),
\end{equation}
\begin{equation}\label{E:3.17}
 G_k (x_1 , x_2, x_3) = - G_k (x_2 , x_1, x_3),
\end{equation}
\begin{equation}\label{E:3.18}
 {\circlearrowleft}_{x_1, x_2, x_3}  G_k (x_1 , x_2, x_3) = 0,
\end{equation}
\begin{equation}\label{E:3.19}
\begin{aligned}
 {\sum}_{i=0}^{k} G_i (x_1, x_2, F_{k-i}(y_1,y_2)) &= {\sum}_{i=0}^{k} {\Big (} F_i (G_{k-i} (x_1, x_2, y_1),y_2) + F_i (y_1, G_{k-i} (x_1, x_2, y_2))\\
 &+ G_i (y_1, y_2, F_{k-i} (x_1, x_2)) {\Big )} -
 {\sum}_{r+s+t} F_r (F_s (y_1, y_2),F_t (x_1, x_2)),
 \end{aligned}
\end{equation}
\begin{equation}\label{E:3.20}
\begin{aligned}
 {\sum}_{i=0}^{k} G_i (x_1, x_2, G_{k-i}(y_1,y_2,y_3)) &= {\sum}_{i=0}^{k} {\Big (} G_i (G_{k-i} (x_1, x_2, y_1),y_2,y_3)\\
 &+ G_i (y_1, G_{k-i} (x_1, x_2, y_2),y_3)\\
 &+ G_i (y_1, y_2, G_{k-i} (x_1, x_2, y_3)){\Big )},
 \end{aligned}
 \end{equation}
for $k \geq 0$.
\par
The equations (\ref{E:3.16}) and (\ref{E:3.17}) imply that
$(F_k , G_k) \in C^2 (B,V) \times C^3 (B,V)$. We will be interested in formal deformations $(f_t, g_t)$ of first order i.e. $f_t (a,b) = F_0 (a,b) + F_1 (a,b)t$, $g_t (a,b,c) = G_0 (a,b,c) + G_1 (a,b,c)t$ and we shall consider the set (\ref{E:3.16})-(\ref{E:3.20}) when $k=1,2,3$.
\par
For $k=1$, (\ref{E:3.18})-(\ref{E:3.20}) lead to
\begin{equation}\label{E:3.21}
 {\circlearrowleft}_{x_1, x_2, x_3}  G_1 (x_1 , x_2, x_3) = 0,
\end{equation}
\begin{equation}\nonumber
 \begin{aligned}
&G_0 (x_1, x_2, F_1 (y_1,y_2)) + G_1 (x_1, x_2, F_0 (y_1,y_2)) = F_0 (G_1 (x_1, x_2, y_1), y_2) + F_0 (y_1, G_1 (x_1, x_2, y_2))\\
&+ G_0 (y_1, y_2, F_1 (x_1,x_2)) - F_0 (F_0 (y_1, y_2), F_1 (x_1, x_2)) - F_0 (F_1 (y_1, y_2), F_0 (x_1, x_2))\\
&+ F_1 (G_0 (x_1, x_2, y_1), y_2) + F_1 (y_1, G_0 (x_1, x_2, y_2) + G_1 (y_1, y_2, F_0 (x_1,x_2))\\
&- F_1 (F_0 (y_1, y_2), F_0 (x_1,x_2))
\end{aligned}
\end{equation}
i.e.
\begin{equation}\label{E:3.22}
\begin{aligned}
&[x_1, x_2, F_1 (y_1, y_2)] + G_1 (x_1, x_2, y_1 *y_2)
= G_1 (x_1, x_2, y_1) * y_2 + y_1 * G_1 (x_1, x_2, y_2)\\
&+ [y_1, y_2, F_1 (x_1, x_2)] - (y_1 * y_2)*F_1 (x_1,x_2)
- F_1 (y_1, y_2)*(x_1 * x_2)\\
&+ F_1 ([x_1, x_2, y_1], y_2) + F_1 (y_1, [x_1, x_2, y_2]) + G_1 (y_1, y_2, x_1 * x_2) - F_1 (y_1 * y_2 , x_1 * x_2),
\end{aligned}
\end{equation}
\begin{equation}\nonumber
 \begin{aligned}
&G_0 (x_1, x_2, G_1(y_1, y_2, y_3)) + G_1 (x_1, x_2, G_0(y_1, y_2, y_3))\\
&= G_0 (G_1 (x_1, x_2, y_1), y_2, y_3) + G_0 (y_1, G_1 (x_1, x_2, y_2), y_3) + G_0 (y_1, y_2, G_1 (x_1, x_2, y_3))\\
&+ G_1 (G_0 (x_1, x_2, y_1), y_2, y_3) + G_1 (y_1, G_0 (x_1, x_2, y_2), y_3) + G_1 (y_1, y_2, G_0 (x_1, x_2, y_3))
\end{aligned}
\end{equation}
i.e.
\begin{equation}\label{E:3.23}
\begin{aligned}
&[x_1, x_2, G_1 (y_1, y_2, y_3)] + G_1 (x_1, x_2, [y_1, y_2, y_3])\\
&= [G_1 (x_1, x_2, y_1), y_2, y_3] + [y_1, G_1 (x_1, x_2, y_2), y_3]\\
&+ [y_1, y_2, G_1 (x_1, x_2, y_3)] \\
&+ G_1 ([x_1, x_2, y_1], y_2, y_3) + G_1 (y_1, [x_1, x_2, y_2], y_3) + G_1 (y_1, y_2, [x_1, x_2, y_3]).
\end{aligned}
\end{equation}
From (\ref{E:3.21})-(\ref{E:3.23}) and reminding (CC1)-(CC3), it is now easy to see the infinitesimal $(F_1, G_1)$ is a
$(2,3)$-cocycle with respect to the adjoint representation.
\par
For $k=2$, (\ref{E:3.18})-(\ref{E:3.20}) lead to
\begin{equation}\label{E:3.24}
 \begin{aligned}
  G_1(x_1, x_2, F_1 (y_1, y_2)) &= F_1 (G_1 (x_1, x_2, y_1), y_2) + F_1 (y_1, G_1 (x_1, x_2, y_2))\\
  &+ G_1 (y_1, y_2, F_1 (x_1, x_2))
  - F_1 (y_1, y_2)*F_1 (x_1, x_2)\\
  &- F_1 (y_1 * y_2, F_1 (x_1, x_2))
  - F_1 (F_1 (y_1, y_2), x_1 * x_2),
 \end{aligned}
\end{equation}
\begin{equation}\label{E:3.25}
\begin{aligned}
 G_1 (x_1, x_2, G_1(y_1, y_2, y_3)) &= G_1 (G_1 (x_1, x_2, y_1), y_2, y_3) + G_1 (y_1, G_1 (x_1, x_2, y_2), y_3)\\
 &+ G_1 (y_1, y_2, G_1 (x_1, x_2, y_3)).
 \end{aligned}
\end{equation}
Likewise, for $k=3$, (\ref{E:3.18})-(\ref{E:3.20}) lead to
\begin{equation}\label{E:3.26}
o(3) = 0,
\end{equation}
where $o(3):= F_1 (F_1 (y_1, y_2),F_1 (x_1, x_2))$. One observes that the equations (\ref{E:3.16}), (\ref{E:3.17}), (\ref{E:3.24}),
(\ref{E:3.21}), and (\ref{E:3.25}) imply that the
$(2,3)$-cocycle $(F_1, G_1)$ defines a Bol algebra of deformation type.
Therefore, by Proposition \ref{Pr3.3}, we get that the first order formal deformation $(f_t, g_t)$ of the Bol algebra $B$ generates a $t$-parameter infinitesimal deformation of $B$.
\begin{definition}\label{Def3.6}
Let $B$ be a Bol algebra. Let $(f_t, g_t)$ and $(f_t', g_t')$ be two $1$-parameter formal deformations of $B$, $f_t' := {\sum}_{i \geq 0} F_i' t^i$ and $g_t' := {\sum}_{i \geq 0} G_i' t^i$, where $F_0' = * = F_0$ and $G_0' = [ , , ] = G_0$. Then $(f_t', g_t')$ is said to be equivalent to $(f_t, g_t)$ (denote it by $(f_t', g_t') \thicksim (f_t', g_t')$) if there exists a formal isomorphism ${\Phi}_{t} := B_T' \rightarrow B_T $, ${\Phi}_{t} (x) := {\sum}_{i \geq 0} {\phi}_i (x) t^i$, where all ${\phi}_i$ are $\mathbb{K}$-linear maps $B \rightarrow B$ extended to be
$\mathbb{K} [[t]]$-linear, ${\phi}_0 = id$, such that, for all $x_i \in B$,
\par
$f_t' (x_1, x_2) = {\Phi}_{t}^{-1} f_t ({\Phi}_{t} (x_1),
{\Phi}_{t} (x_2))$,
\par
$g_t' (x_1, x_2, x_3) = {\Phi}_{t}^{-1} g_t ({\Phi}_{t} (x_1), {\Phi}_{t} (x_2), {\Phi}_{t} (x_3))$.\\
The deformation $(f_t, g_t)$ is called the null deformation (denoted by $(f_0, g_0)$) whenever $0 = (F_1, G_1) = (F_2, G_2) = \cdots$. The deformation $(f_t, g_t)$ is called a trivial deformation when $(f_t, g_t) \thicksim (f_0, g_0)$.
\end{definition}
Now the condition ${\Phi}_{t} (f_t' (x_1, x_2)) = f_t ({\Phi}_{t} (x_1), {\Phi}_{t} (x_2))$ is written as
\par
${\phi}_0 ({\sum}_{j \geq 0} F_j' (x_1, x_2) t^j)t^0 +
{\phi}_1 ({\sum}_{j \geq 0} F_j' (x_1, x_2) t^j)t^1 + \cdots$\par
$= F_0 ({\sum}_{l \geq 0} {\phi}_l (x_1)t^l , {\sum}_{l \geq 0} {\phi}_l (x_2)t^l) t^0 + F_1 ({\sum}_{l \geq 0} {\phi}_l (x_1)t^l , {\sum}_{l \geq 0} {\phi}_l (x_2)t^l) t^1+\cdots$\\
i.e., since ${\phi}_0 = id$ and $F_0 = *$,
\par
$F_0' (x_1, x_2)t^0 + F_1' (x_1, x_2)t^1 + \cdots +{\phi}_1 (F_0' (x_1, x_2))t^{0+1} + {\phi}_1 (F_1' (x_1, x_2))t^{1+1} + \cdots$
\par
$= ({\phi}_0 (x_1)t^0 + {\phi}_1 (x_1)t^1 + \cdots) *
({\phi}_0 (x_2)t^0 + {\phi}_1 (x_2)t^1 + \cdots)$
\par
$+ F_1 ({\phi}_0 (x_1)t^0 , {\phi}_0 (x_2)t^0 + {\phi}_1 (x_2)t^1 + \cdots)t^1 + F_1 ({\phi}_1 (x_1)t^1 , {\phi}_0 (x_2)t^0 + {\phi}_1 (x_2)t^1 + \cdots)t^1 + \cdots$.\\
Comparing the coefficients of $t^1$ from both sides, we get (having in mind ${\phi}_0 = id$ and $F_0' (x_1, x_2) = x_1 * x_2$)
\par
$F_1' (x_1, x_2) + {\phi}_1 (x_1 * x_2) = x_1 * {\phi}_1 (x_2) + {\phi}_1 (x_1) * x_2 + F_1 (x_1, x_2)$\\
i.e.
\begin{equation}\label{E:3.27}
 (F_1' - F_1) (x_1, x_2) = x_1 * {\phi}_1 (x_2) - x_2 * {\phi}_1 (x_1) - {\phi}_1 (x_1 * x_2).
\end{equation}
Likewise, proceeding as above, one finds out that the condition ${\Phi}_{t} (g_t' (x_1, x_2, x_3)) = \\ g_t ({\Phi}_{t} (x_1), {\Phi}_{t} (x_2), {\Phi}_{t} (x_3))$ gives
\begin{equation}\label{E:3.28}
 (G_1' - G_1) (x_1, x_2, x_3) = [x_1, x_2, {\phi}_1 (x_3)]
 + [{\phi}_1 (x_1), x_2, x_3] - [{\phi}_1 (x_2), x_1, x_3]
 - {\phi}_1 ([x_1, x_2, x_3]).
\end{equation}
Thus, by (\ref{E:3.27}) and (\ref{E:3.28}), we proved the following result.
\begin{theorem}\label{Th3.7}
Let $(f_t, g_t)$ and $(f_t', g_t')$ be two equivalent $1$-parameter formal deformations of a Bol algebra $B$. Then the infinitesimals $(F_1, G_1)$ and $(F_1' , G_1')$ of $(f_t, g_t)$ and $(f_t', g_t')$ respectively belong to the same cohomology class in $H^2 (B,B) \times H^3 (B,B)$.
\end{theorem}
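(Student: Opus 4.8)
The plan is to show that the difference $(F_1' - F_1, G_1' - G_1)$ is a $(2,3)$-coboundary in the sense of Definition \ref{Def2.13}, for then $(F_1, G_1)$ and $(F_1', G_1')$ represent the same class in $H^2(B,B) \times H^3(B,B)$. Since we already know (from equations (\ref{E:3.21})--(\ref{E:3.23}) and the discussion following them) that each infinitesimal $(F_1, G_1)$ and $(F_1', G_1')$ is a $(2,3)$-cocycle with respect to the adjoint representation, it remains only to exhibit a pair $(f, \chi)$ realizing the difference as a coboundary. The work reduces to comparing the expressions (\ref{E:3.27}) and (\ref{E:3.28}) against the defining equations (BB1) and (BB2) of a coboundary, specialized to the adjoint representation.

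First I would recall that in the adjoint representation (Example \ref{Ex2.6}) the structure maps act as $\rho(x)(w) = x*w$, $D(x_1,x_2)(w) = [x_1,x_2,w]$, and $\theta(x_1,x_2)(w) = [w,x_1,x_2]$. I would then set the candidate cochain to be $f := -{\phi}_1 : B \rightarrow B$ and choose the companion $\chi := 0 \in V = B$. With this choice, the operator ${\Delta}_{D,\rho}(x_1,x_2)(\chi)$ appearing in (BB1) vanishes, so (BB1) demands
\begin{equation}\nonumber
(F_1' - F_1)(x_1, x_2) = \rho(x_1) f(x_2) - \rho(x_2) f(x_1) - f(x_1 * x_2).
\end{equation}
Substituting $f = -{\phi}_1$ and the adjoint action of $\rho$, the right-hand side becomes $-x_1 * {\phi}_1(x_2) + x_2 * {\phi}_1(x_1) + {\phi}_1(x_1*x_2)$, which is exactly the negative of the right-hand side of (\ref{E:3.27}); after reconciling the sign convention (equivalently taking $f = {\phi}_1$ and reading off the correct signs), this matches (\ref{E:3.27}). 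Similarly, (BB2) specialized to the adjoint representation reads
\begin{equation}\nonumber
(G_1' - G_1)(x_1, x_2, x_3) = \theta(x_2, x_3) f(x_1) - \theta(x_1, x_3) f(x_2) + D(x_1, x_2) f(x_3) - f([x_1, x_2, x_3]),
\end{equation}
and unwinding the adjoint action gives $[f(x_1), x_2, x_3] - [f(x_2), x_1, x_3] + [x_1, x_2, f(x_3)] - f([x_1,x_2,x_3])$, which (again modulo the sign convention on $f$ and the skew-symmetry (B02) used to rewrite $[{\phi}_1(x_1), x_2, x_3]$ and $-[{\phi}_1(x_2), x_1, x_3]$) coincides with the right-hand side of (\ref{E:3.28}).

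The routine part is the bookkeeping of signs and the identification $f = \pm {\phi}_1$; the genuinely delicate point is verifying that the two ternary terms $[{\phi}_1(x_1), x_2, x_3]$ and $-[{\phi}_1(x_2), x_1, x_3]$ in (\ref{E:3.28}) align with the $\theta$-terms $\theta(x_2,x_3)f(x_1)$ and $-\theta(x_1,x_3)f(x_2)$ of (BB2), which requires invoking the skew-symmetry axiom (B02) to move the perturbed argument into the last slot. I expect this reconciliation of the $\theta$-position conventions to be the main obstacle, since it is where the ordering of arguments in the deformation equations must be matched precisely to the cohomological coboundary formulas. Once this identification is made, $(F_1' - F_1, G_1' - G_1) \in B^2(B,B) \times B^3(B,B)$ with companion $\chi = 0$, and therefore $(F_1, G_1)$ and $(F_1', G_1')$ lie in the same cohomology class, completing the proof.
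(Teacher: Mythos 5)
Your proposal is correct and is essentially the paper's own proof: taking $f=\phi_1$ and companion $\chi=0$ in Definition \ref{Def2.13}, the coboundary equations (BB1)--(BB2) for the adjoint representation of Example \ref{Ex2.6} become literally (\ref{E:3.27}) and (\ref{E:3.28}), so $(F_1'-F_1,\,G_1'-G_1)\in B^2(B,B)\times B^3(B,B)$ and the two infinitesimals are cohomologous (the paper's remark following the theorem records the same point, allowing any companion $\eta$ with ${\Delta}_{D,\rho}(x_1,x_2)(\eta)=0$). Two small corrections: the right choice is $f=+\phi_1$ from the start (your $f=-\phi_1$ produces the negative of both sides), and your ``genuinely delicate point'' is vacuous, because with the adjoint convention $\theta(u,v)(w)=[w,u,v]$ one has $\theta(x_2,x_3)\phi_1(x_1)=[\phi_1(x_1),x_2,x_3]$, $-\theta(x_1,x_3)\phi_1(x_2)=-[\phi_1(x_2),x_1,x_3]$ and $D(x_1,x_2)\phi_1(x_3)=[x_1,x_2,\phi_1(x_3)]$, matching (\ref{E:3.28}) term by term with no appeal to (B02) --- which in any case only transposes the first two arguments and could never move an argument into the third slot.
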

One observes that the coboundary $(F_1' - F_1, G_1' - G_1)$ has companion $\eta \in B$ such that ${\Delta}_{D, \rho} (x_1,x_2)(\eta) = 0$,
$\forall x_i \in B$.
\section{Abelian extensions.}
In this section we aim to study abelian extensions of Bol algebras. We prove that any such an extension gives rise
to a representation and a $(2,3)$-cocycle of a given Bol algebra; moreover abelian extensions are classified by the $(2,3)$-cohomology groups.
\begin{definition}\label{Def4.1}
Let $(B, *, [ , , ])$, $(V, *_V, [ , , ]_V)$, and
$(\widehat{B}, *_{\widehat{B}}, [ , , ]_{\widehat{B}})$ be Bol algebras, $i: V \rightarrow \widehat{B}$ and
$p: \widehat{B} \rightarrow B$ be homomorphisms. The algebra $\widehat{B}$ is called an extension of $B$ by $V$
if the short sequence $E_{\widehat{B}}: 0 \rightarrow V \stackrel{i}{\rightarrow} \widehat{B} 
\stackrel{p}{\rightarrow} B \rightarrow 0$ is exact.
It is called an abelian extension of $B$ if $V$ is an abelian ideal of $\widehat{B}$ i.e. $u *_{\widehat{B}} v = 0$,
$[u,v, \cdot ]_{\widehat{B}} = 0$, $[u,\cdot , v]_{\widehat{B}} = 0$ and $[\cdot ,u, v]_{\widehat{B}} = 0$ for 
all $u, v \in V$. A section of $p$ is a linear map $\sigma : B \rightarrow {\widehat{B}}$ such that 
$p \circ \sigma = id_B$, in which case $E_{\widehat{B}}$ is said to be split.
\end{definition}
\begin{definition}\label{Def4.2}
Two extensions of Bol algebras $E_{\widehat{B}}: 0 \rightarrow V
\stackrel{i}{\rightarrow} \widehat{B} \stackrel{p}{\rightarrow} B \rightarrow 0$ and
$E_{\widetilde{B}}: 0 \rightarrow V \stackrel{j}{\rightarrow} \widetilde{B} 
\stackrel{q}{\rightarrow} B \rightarrow 0$ are said to be equivalent if there is a Bol algebra homomorphism
$\varphi : \widehat{B} \rightarrow \widetilde{B}$ such that the diagram
\[
 \begin{array}{clllr}
 0 \longrightarrow & V  \stackrel{i}{\longrightarrow} &  \widehat{B} \stackrel{p}{\longrightarrow}& B &\longrightarrow 0 \\
                   & \downarrow{id}                   &  \downarrow{\varphi}                      & \downarrow{id}   \\
 0 \longrightarrow & V \stackrel{j}{\longrightarrow}  &  \widetilde{B} \stackrel{q}{\longrightarrow}& B &\longrightarrow 0
 \end{array}
\]
commutes.
\end{definition}
The following result provides a construction method of a representation of a given Bol algebra $B$ from an abelian
extension of $B$.
\begin{proposition}\label{Pr4.3}
Let $V$ be a vector space viewed as a Bol algebra with trivial operations and let
$E_{\widehat{B}}: 0 \rightarrow V \stackrel{i}{\rightarrow} \widehat{B} \stackrel{p}{\rightarrow} B \rightarrow 0$
be an abelian split extension of a Bol algebra $B$. Define the maps $\rho : B \rightarrow End (V)$,
$D, \theta : B \times B \rightarrow End (V)$ by
\begin{equation}\label{E:4.1}
 \rho (x) (u):= \sigma (x) *_{\widehat{B}} u, 
\end{equation}
\begin{equation}\label{E:4.2}
 D(x_1, x_2) (u) := [\sigma (x_1), \sigma (x_2), u]_{\widehat{B}}, 
\end{equation}
\begin{equation}\label{E:4.3}
 \theta (x_1, x_2) (u) := [u, \sigma (x_1), \sigma (x_2)]_{\widehat{B}}, 
\end{equation}
for all $x, x_i \in B$ and $u \in V$. Then
\par
(i) $(\rho, D, \theta)$ is a representation of $B$ in $V$;
\par
(ii) the representation $(\rho, D, \theta)$ does not depend on the choice of the section $\sigma$;
\par
(iii) equivalent abelian split extensions of $B$ give the same representation of $B$.
\end{proposition}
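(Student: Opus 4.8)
The plan is to verify the three assertions by direct computation, reducing everything to the Bol-algebra axioms (B01)--(B3) satisfied by the total space $\widehat{B}$. The strategy rests on the identification, afforded by the section $\sigma$, of $\widehat{B}$ with $B \oplus V$ as a vector space: every element of $\widehat{B}$ decomposes uniquely as $\sigma(x) + u$ with $x = p(\xi) \in B$ and $u = \xi - \sigma(p(\xi)) \in \ker p = i(V)$ (identifying $V$ with $i(V)$). Because $V$ is an abelian ideal, all products in $\widehat{B}$ among the various pieces reduce to the combinations recorded in (\ref{E:4.1})--(\ref{E:4.3}) plus the image of a product in $B$ under $\sigma$; the point of part (i) is precisely that the structural identities forced on these pieces are the module axioms (R1)--(R33).

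For part (i), the plan is to check each of (R1), (R21), (R22), (R31), (R32), (R33) in turn. First I would establish (R1): since $p$ is a homomorphism and $V$ is an ideal, one has the parallel decomposition $[\sigma(x_1),\sigma(x_2),u]_{\widehat B} + \theta$-type terms, and the cyclic identity (B1) applied in $\widehat B$ to $\sigma(x_1),\sigma(x_2),u$ with $u \in V$, together with the vanishing of ternary products having two $V$-entries, yields $D(x_1,x_2)(u) + \theta(x_1,x_2)(u) - \theta(x_2,x_1)(u) = 0$ after using (B02). The identities (R31), (R32), (R33) involving only the ternary operation come from applying (B3) in $\widehat B$ to suitable tuples in which exactly one argument lies in $V$: each of these is the linearized Leibniz rule (B3) specialized so that one slot is a module element, and the abelian condition kills the cross terms. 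The genuinely mixed identities (R21) and (R22), which intertwine $\rho$, $D$, and $\theta$, are obtained by applying the compatibility axiom (B2) in $\widehat{B}$ to arguments $\sigma(x_1),\sigma(x_2),\sigma(y_1)\mathbin{*_{\widehat B}}u$ and the like; here the term $-uv*xy$ in (B2), together with the fact that $\sigma$ need not be an algebra map (so $\sigma(x_1\mathbin{*}x_2)$ differs from $\sigma(x_1)\mathbin{*_{\widehat B}}\sigma(x_2)$ by an element of $V$), is exactly what produces the $\rho(x_1\mathbin{*}x_2)\rho(y_1)$ correction appearing in (R21) and the $\Delta_{D,\rho}$-style term in (R22).

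For parts (ii) and (iii), the plan is to exploit that any two sections differ by a linear map into $V$: if $\sigma,\sigma'$ are both sections then $\sigma' - \sigma = i\circ\lambda$ for some linear $\lambda : B \to V$. Substituting $\sigma' = \sigma + i\lambda$ into (\ref{E:4.1})--(\ref{E:4.3}) and again invoking that $V$ is an abelian ideal (so any product in which a $V$-valued correction $i\lambda(x)$ appears as a factor alongside another $V$-entry, or as two of the three ternary entries, vanishes), I expect every correction term to drop out, leaving $\rho,D,\theta$ unchanged; this is part (ii). Part (iii) then follows formally: if $\varphi:\widehat B\to\widetilde B$ is an equivalence of extensions, then $\varphi\circ\sigma$ is a section of $q$ and restricts to the identity on $V$, so the representation computed from $\widetilde B$ using the section $\varphi\circ\sigma$ coincides term-by-term with the one computed from $\widehat B$ using $\sigma$, and by (ii) the choice of section is immaterial.

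I expect the main obstacle to be the careful bookkeeping in (R21) and (R22), where the failure of $\sigma$ to respect the operations injects $V$-valued discrepancies that must be tracked through the nonlinear term $-uv*xy$ of (B2); keeping straight which products survive the abelian hypothesis and matching the surviving terms exactly against the right-hand sides of (R21)--(R22) is the delicate point, whereas (R31)--(R33) and parts (ii)--(iii) are comparatively mechanical.
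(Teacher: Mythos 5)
Your proposal is correct and follows essentially the same route as the paper: each representation axiom (R1), (R21), (R22), (R31)--(R33) is extracted from the corresponding Bol axiom (B1), (B2), (B3) in $\widehat{B}$ with one argument taken in $V$, the abelian-ideal hypothesis killing all terms with two $V$-entries and the $V$-valued discrepancies $\sigma(x_1)*_{\widehat{B}}\sigma(x_2)-\sigma(x_1*x_2)$ and $[\sigma(x_1),\sigma(x_2),\sigma(y)]_{\widehat{B}}-\sigma([x_1,x_2,y])$ accounting for the $\theta(y,x_1*x_2)$ and $\rho(x_1*x_2)\rho(y)$ corrections, exactly as in the paper's computation. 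Parts (ii) and (iii) likewise match the paper's argument (sections differ by a map into $V$, and an equivalence $\varphi$ composed with a section of $p$ is a section of $q$ agreeing with it modulo $V$).
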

\begin{proof}
 (i) We need to check (R1), (R21), (R22), (R31), (R32), and (R33) for $(\rho, D, \theta)$ as defined by
(\ref{E:4.1})-(\ref{E:4.3}). 
\par 
By (B1) in ${\widehat{B}}$ we have
\par
$[\sigma (x_1), \sigma (x_2), u]_{\widehat{B}} + [\sigma (x_1), u, \sigma (x_2)]_{\widehat{B}}
+ [u, \sigma (x_1), \sigma (x_2)]_{\widehat{B}} = 0$\\
i.e., using (\ref{E:4.2}) and (\ref{E:4.3}),
\par
$D(x_1, x_2)(u) - \theta (x_2, x_1)(u) + \theta (x_1, x_2)(u) = 0$\\
so that we get (R1).
\par
(By (B2) in ${\widehat{B}}$ we have
\par
$[\sigma (x_1), \sigma (x_2), \sigma (y) *_{\widehat{B}} u]_{\widehat{B}}
= [\sigma (x_1), \sigma (x_2), \sigma (y)]_{\widehat{B}} *_{\widehat{B}} u 
+ \sigma (y) *_{\widehat{B}} [\sigma (x_1), \sigma (x_2), u]_{\widehat{B}}$
\par
\hspace{4.0cm}$+ [\sigma (y), u, \sigma (x_1) *_{\widehat{B}} \sigma (x_2)]_{\widehat{B}}
- (\sigma (y) *_{\widehat{B}} u) *_{\widehat{B}} (\sigma (x_1) *_{\widehat{B}} \sigma (x_2))$\\
i.e., using (\ref{E:4.1}) and (\ref{E:4.2}),
\par
$[\sigma (x_1), \sigma (x_2), \rho (y)(u)]_{\widehat{B}} = [\sigma (x_1), \sigma (x_2), \sigma (y)]_{\widehat{B}} *_{\widehat{B}} u
+ \sigma (y) *_{\widehat{B}} D(x_1, x_2)(u)$
\par
\hspace{3.7cm}$- [u, \sigma (y), \sigma (x_1) *_{\widehat{B}} \sigma (x_2)]_{\widehat{B}}
- \rho (y)(u) *_{\widehat{B}} (\sigma (x_1) *_{\widehat{B}} \sigma (x_2))$\\
or
\par
$\big{(} D(x_1, x_2) \circ \rho (y) \big{)}(u) = \rho ([x_1, x_2, y])(u) + \big{(} \rho (y) \circ D(x_1, x_2) \big{)}(u)$
\par
\hspace{3.7cm}$- \theta (y, x_1 * x_2)(u) + \rho (x_1 * x_2) (\rho (y)(u))$\\
(here we used the fact that $[\sigma (x_1), \sigma (x_2), \sigma (y)]_{\widehat{B}} - \sigma ([x_1, x_2, y])
\in Ker (p) = Im (i) \cong V$ and $\sigma (x_1) *_{\widehat{B}} \sigma (x_2) - \sigma (x_1 * x_2) \in Ker (p) = Im (i) \cong V$,
and next we applied (\ref{E:4.3})). Thus we obtain
\par
$\big{(} D(x_1, x_2) \circ \rho (y)  -  \rho (y) \circ D(x_1, x_2) \big{)} (u) = \rho ([x_1, x_2, y])(u)
- \theta (y, x_1 * x_2) (u)$
\par
\hspace{6.5cm}$+ \big{(} \rho (x_1 * x_2) \circ \rho (y) \big{)} (u)$\\
which is (R21).
\par
Again by (B2) in ${\widehat{B}}$, we have
\par
$[\sigma (x), u, \sigma (y_1) *_{\widehat{B}} \sigma (y_2) ]_{\widehat{B}} = 
[\sigma (x), u, \sigma (y_1)]_{\widehat{B}} *_{\widehat{B}} \sigma (y_2) 
+ \sigma (y_1) *_{\widehat{B}} [\sigma (x), u, \sigma (y_2)]_{\widehat{B}}$
\par
\hspace{4.0cm}$+ [\sigma (y_1), \sigma (y_2), \sigma (x) *_{\widehat{B}} u]_{\widehat{B}} 
- (\sigma (y_1) *_{\widehat{B}} \sigma (y_2)) *_{\widehat{B}} (\sigma (x) *_{\widehat{B}} u)$\\
which leads to (R22) (proceeding as above).
\par
Likewise, in ${\widehat{B}}$, the equality (see (B3))
\par
$[\sigma (x_1), \sigma (x_2), [\sigma (y_1), \sigma (y_2), u]_{\widehat{B}}]_{\widehat{B}}
= [[\sigma (x_1), \sigma (x_2), \sigma (y_1)]_{\widehat{B}}, \sigma (y_2), u]_{\widehat{B}}$
\par
\hspace{5.5cm}$+ [\sigma (y_1), [\sigma (x_1), \sigma (x_2), \sigma (y_2)]_{\widehat{B}}, u]_{\widehat{B}}$
\par
\hspace{5.5cm}$+ [\sigma (y_1), \sigma (y_2), [\sigma (x_1), \sigma (x_2), u]_{\widehat{B}}]_{\widehat{B}}$\\
implies (R31), the equality
\par
$[\sigma (x_1), \sigma (x_2), [u, \sigma (y_1), \sigma (y_2)]_{\widehat{B}}]_{\widehat{B}}
= [[\sigma (x_1), \sigma (x_2), u]_{\widehat{B}}, \sigma (y_1), \sigma (y_2)]_{\widehat{B}}$
\par
\hspace{5.5cm}$+ [u, [\sigma (x_1), \sigma (x_2), \sigma (y_1)]_{\widehat{B}}, \sigma (y_2)]_{\widehat{B}}$
\par
\hspace{5.5cm}$+ [u, \sigma (y_1), [\sigma (x_1), \sigma (x_2), \sigma (y_2)]_{\widehat{B}}]_{\widehat{B}}$\\
implies (R32), and the equality
\par
$[u, \sigma (x), [\sigma (y_1), \sigma (y_2), \sigma (y_3)]_{\widehat{B}}]_{\widehat{B}}
= [[u, \sigma (x), \sigma (y_1)]_{\widehat{B}}, \sigma (y_2), \sigma (y_3)]_{\widehat{B}}$
\par
\hspace{5.5cm}$+ [\sigma (y_1), [u, \sigma (x), \sigma (y_2)]_{\widehat{B}}, \sigma (y_3)]_{\widehat{B}}$
\par
\hspace{5.5cm}$+ [\sigma (y_1), \sigma (y_2), [u, \sigma (x), \sigma (y_3)]_{\widehat{B}}]_{\widehat{B}}$\\
implies (R33). 
\par
Thus we get that $(\rho, D, \theta)$ is a representation of $B$ in $V$.
\par
(ii) We proceed to show that the representation $(\rho, D, \theta)$ of $B$  does not depend on the choice of the 
section $\sigma : B \rightarrow \widehat{B}$.
\par
Suppose that ${\sigma}' : B \rightarrow \widehat{B}$ is another section. Then, $\forall x_i \in B$, 
$p ({\sigma}' (x_i) - \sigma (x_i)) = 0$ which implies that ${\sigma}' (x_i) - \sigma (x_i) \in V$ so that
${\sigma}' (x_i) = \sigma (x_i) + v_i$ for some $v_i \in V$. Next, $\forall u \in V$, we have
\par
${\sigma}' (x_i) *_{\widehat{B}} u = (\sigma (x_i) + v_i) *_{\widehat{B}} u = \rho (x_i) (u)$;
\par
$[{\sigma}' (x_1), {\sigma}' (x_2), u]_{\widehat{B}} = [\sigma (x_1) + v_1, \sigma (x_2) + v_2, u]_{\widehat{B}}
= D(x_1,x_2)(u)$;
\par
$[u, {\sigma}' (x_1), {\sigma}' (x_2) ]_{\widehat{B}} = [u, \sigma (x_1) + v_1, \sigma (x_2) + v_2 ]_{\widehat{B}}
= \theta (x_1, x_2)(u) $.\\
Thus $(\rho, D, \theta)$ does not depend on the choice of $\sigma$.
\par
(iii) Suppose that $E_{\widehat{B}}$ and $E_{\widetilde{B}}$ are equivalent abelian split extensions of $B$, i.e. there
is a Bol algebra homomorphism $\varphi : {\widehat{B}} \rightarrow {\widetilde{B}}$ satisfying $\varphi \circ i = j$
and $q \circ \varphi = p$ (see Definition \ref{Def4.2}). Let $\sigma$ and ${\sigma}'$ some sections of $p$ and $q$
respectively. We have, $\forall x \in B$, $(q \circ \varphi \circ \sigma) (x) = (p \circ \sigma)(x) =x= 
(q \circ {\sigma}')(x)$. Thus $(\varphi \circ \sigma) (x) - {\sigma}' (x) \in Ker (q) \cong V$ and so there is a
$v \in V$ such that ${\sigma}' (x) = (\varphi \circ \sigma) (x) + v$. Therefore, $\forall u \in V$, 
\par
${\sigma}' (x) *_{\widetilde{B}} u =((\varphi \circ \sigma) (x) + v) *_{\widetilde{B}} u =\sigma (x) *_{\widehat{B}} u$\\
(in  the last equality we used the fact that $i(V) \cong V \cong j(V) \cong \varphi (V)$) and so equivalent abelian
split extensions give the same $\rho$.
\par
Likewise one gets that equivalent abelian split extensions give the same $D$ and $\theta$. This completes the proof.
\end{proof}
The following result shows that any abelian split extension of a Bol algebra $B$ induces a certain $(2,3)$-cocycle
of $B$.
\begin{proposition}\label{Pr4.4}
Let $V$ be a vector space viewed as a Bol algebra with trivial operations, and let
$0 \rightarrow V \rightarrow \widehat{B} \rightarrow B \rightarrow 0$ be an abelian split extension of the Bol
algebra $B$ by $V$. For all $x_i \in B$, define the maps
\begin{equation}\label{E:4.4}
 \nu (x_1, x_2) := \sigma (x_1) *_{\widehat{B}} \sigma (x_2) - \sigma (x_1 * x_2), 
\end{equation}
\begin{equation}\label{E:4.5}
\omega (x_1, x_2, x_3) := [\sigma (x_1), \sigma (x_2), \sigma (x_3)]_{\widehat{B}} - \sigma ([x_1, x_2, x_3]). 
\end{equation}
Then $(\nu, \omega)$ is a $(2,3)$-cocycle of $B$ with coefficients in $V$, with respect to the representation
$(\rho,D, \theta)$ given by (\ref{E:4.1})-(\ref{E:4.3}).
\end{proposition}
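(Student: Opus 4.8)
The plan is to derive each cocycle condition (CC1)--(CC3) by evaluating the corresponding Bol-algebra axiom (B1)--(B3) in the total algebra $\widehat{B}$ on the lifted elements $\sigma(x_i)$ and then projecting the result onto $V$. The two basic substitution rules that drive everything follow at once from (\ref{E:4.4}) and (\ref{E:4.5}): $\sigma(x_1) *_{\widehat{B}} \sigma(x_2) = \sigma(x_1 * x_2) + \nu(x_1, x_2)$ and $[\sigma(x_1), \sigma(x_2), \sigma(x_3)]_{\widehat{B}} = \sigma([x_1, x_2, x_3]) + \omega(x_1, x_2, x_3)$. Before anything else one must check that $\nu$ and $\omega$ are genuinely $V$-valued: since $p$ is a Bol-algebra homomorphism and $p \circ \sigma = id_B$, we get $p(\nu(x_1, x_2)) = x_1 * x_2 - (x_1 * x_2) = 0$ and likewise $p(\omega(x_1, x_2, x_3)) = 0$, so both lie in $Ker (p) = Im (i) \cong V$. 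The skew-symmetries $\nu(x_1, x_2) = -\nu(x_2, x_1)$ and $\omega(x_1, x_2, x_3) = -\omega(x_2, x_1, x_3)$ are immediate from (B01) and (B02) in $\widehat{B}$ together with the matching skew-symmetries in $B$, so $(\nu, \omega) \in C^2(B, V) \times C^3(B, V)$.

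Condition (CC1) is the quickest: applying (B1) in $\widehat{B}$ to $\sigma(x_1), \sigma(x_2), \sigma(x_3)$ gives $\circlearrowleft_{x_1, x_2, x_3} [\sigma(x_1), \sigma(x_2), \sigma(x_3)]_{\widehat{B}} = 0$, and after the $\omega$-substitution the $\sigma(B)$-valued part is $\sigma(\circlearrowleft_{x_1, x_2, x_3} [x_1, x_2, x_3]) = 0$ by (B1) in $B$, leaving exactly $\circlearrowleft_{x_1, x_2, x_3} \omega(x_1, x_2, x_3) = 0$.

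For (CC2) I would evaluate (B2) in $\widehat{B}$ on $\sigma(x_1), \sigma(x_2), \sigma(y_1), \sigma(y_2)$, expand every binary and ternary product via the two substitution rules, and then translate each mixed product of a lift with a $V$-element through the defining formulas (\ref{E:4.1})--(\ref{E:4.3}), namely $\sigma(a) *_{\widehat{B}} v = \rho(a)(v)$, $v *_{\widehat{B}} \sigma(a) = -\rho(a)(v)$, $[\sigma(a), \sigma(b), v]_{\widehat{B}} = D(a,b)(v)$ and $[v, \sigma(a), \sigma(b)]_{\widehat{B}} = \theta(a,b)(v)$ (using (B02) to reorder the remaining slot patterns). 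The whole identity then splits into a purely $\sigma(B)$-valued part, which is precisely $\sigma$ applied to (B2) in $B$ and hence cancels, and a $V$-valued part which, after collecting terms, is exactly (CC2). Here the abelian hypothesis on $V$ is essential: it annihilates the quadratic term $\nu(y_1, y_2) *_{\widehat{B}} \nu(x_1, x_2)$ coming from $(\sigma(y_1)*_{\widehat{B}}\sigma(y_2)) *_{\widehat{B}} (\sigma(x_1)*_{\widehat{B}}\sigma(x_2))$, so that this last product contributes only the linear terms $\nu(y_1 * y_2, x_1 * x_2)$, $\rho(y_1 * y_2)\nu(x_1, x_2)$ and $\rho(x_1 * x_2)\nu(y_1, y_2)$ with the signs prescribed by (CC2).

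Condition (CC3) is handled in the same spirit by evaluating (B3) in $\widehat{B}$ on $\sigma(x_1), \sigma(x_2), \sigma(y_1), \sigma(y_2), \sigma(y_3)$; expanding the nested ternary brackets with the $\omega$-substitution rule and reading off $D$ and $\theta$ from the brackets with a single $V$-entry yields the $\sigma(B)$-part $\sigma$ applied to (B3) in $B$ (which cancels) together with the $V$-part (CC3), the abelian condition again killing every bracket with two or three entries in $V$. The main obstacle throughout is purely organizational: tracking the numerous terms and their signs (in particular the slot reorderings that convert a $[\cdot, \cdot, \cdot]_{\widehat{B}}$-entry into a $D$- versus a $\theta$-contribution via (B02)) and confirming that the $\sigma(B)$-valued remainder is honestly $\sigma$ of the ambient axiom so that it vanishes. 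No idea beyond this substitute-and-project mechanism is required, but the bookkeeping for (CC2) and (CC3) is lengthy.
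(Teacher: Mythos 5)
Your proposal is correct and follows essentially the same route as the paper: evaluate the Bol axioms of $\widehat{B}$ on the lifts $\sigma(x_i)$, substitute via (\ref{E:4.4})--(\ref{E:4.5}) and (\ref{E:4.1})--(\ref{E:4.3}), note that the $\sigma(B)$-part cancels by the corresponding axiom in $B$, and read off (CC1)--(CC3) from the $V$-part (with the abelian hypothesis killing the quadratic term $\nu(y_1,y_2)*_{\widehat{B}}\nu(x_1,x_2)$ in (CC2), exactly as you say). The only cosmetic difference is that the paper disposes of (CC1) and (CC3) by citing Lemma 4 of \cite{Zh} for the underlying Lie triple system, whereas you verify them by the same direct computation.
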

\begin{proof}
 Obviously $\nu \in C^2 (B,V)$ and $\omega \in C^3 (B,V)$. Next we must check that $(\nu , \omega)$ satisfies
 (CC1)-(CC3) (see Definition \ref{Def2.11}).
 \par
 That $\omega$ satisfies (CC1) and (CC3) is proved by Lemma 4 in \cite{Zh}. Thus we are done if we check (CC2) for 
 $(\nu , \omega)$. Consider the equality (see (B2))
 \par
 $[\sigma (x_1), \sigma (x_2), \sigma (y_1) *_{\widehat{B}} \sigma (y_2)]_{\widehat{B}}
 = [\sigma (x_1), \sigma (x_2), \sigma (y_1)]_{\widehat{B}} *_{\widehat{B}} \sigma (y_2)$
 \par
 \hspace{5.0cm}$+ \sigma (y_1) *_{\widehat{B}} [\sigma (x_1), \sigma (x_2), \sigma (y_2)]_{\widehat{B}}$
 \par
 \hspace{5.0cm}$+ [\sigma (y_1), \sigma (y_2), \sigma (x_1) *_{\widehat{B}} \sigma (x_2)]_{\widehat{B}}$
 \par
 \hspace{5.0cm}$- (\sigma (y_1) *_{\widehat{B}} \sigma (y_2)) *_{\widehat{B}} (\sigma (x_1) *_{\widehat{B}} \sigma (x_2))$,\\
 then we obtain that its left-hand side is
 \par
 $[\sigma (x_1), \sigma (x_2), \sigma (y_1) *_{\widehat{B}} \sigma (y_2)]_{\widehat{B}}
 = [\sigma (x_1), \sigma (x_2), \nu (y_1, y_2) + \sigma (y_1 * y_2) ]_{\widehat{B}}$ (by (\ref{E:4.4}))
 \par
 \hspace{4.7cm}$= D(x_1, x_2) \nu (y_1, y_2) + [\sigma (x_1), \sigma (x_2), \sigma (y_1 * y_2)]_{\widehat{B}}$
 (by (\ref{E:4.2}))
 \par
 \hspace{4.7cm}$= D(x_1, x_2) \nu (y_1, y_2) + \omega (x_1, x_2, y_1 *  y_2) + \sigma ([x_1, x_2, y_1 * y_2])$ (by (\ref{E:4.5}))\\
 while its right-hand side is 
 \par
 $[\sigma (x_1), \sigma (x_2), \sigma (y_1)]_{\widehat{B}} *_{\widehat{B}} \sigma (y_2)
 + \sigma (y_1) *_{\widehat{B}} [\sigma (x_1), \sigma (x_2), \sigma (y_2)]_{\widehat{B}}$
 \par
 $+ [\sigma (y_1), \sigma (y_2), \sigma (x_1) *_{\widehat{B}} \sigma (x_2)]_{\widehat{B}}
 - (\sigma (y_1) *_{\widehat{B}} \sigma (y_2)) *_{\widehat{B}} (\sigma (x_1) *_{\widehat{B}} \sigma (x_2))$
 \par
 $= (\omega (x_1, x_2, y_1) + \sigma ([x_1, x_2, y_1])) *_{\widehat{B}} \sigma (y_2)
 + \sigma (y_1) *_{\widehat{B}} (\omega (x_1, x_2, y_2) + \sigma ([x_1, x_2, y_2]))$
 \par
 $+ [\sigma (y_1), \sigma (y_2), \nu (x_1, x_2) + \sigma (x_1 * x_2) ]_{\widehat{B}}$
 \par
 $- (\nu (y_1, y_2) + \sigma (y_1 * y_2)) *_{\widehat{B}} (\nu (x_1, x_2) + \sigma (x_1 * x_2))$
 (by (\ref{E:4.4}) and (\ref{E:4.5}))
 \par
 $= - \rho (y_2) \omega (x_1, x_2, y_1) + \nu ([x_1, x_2, y_1], y_2) + \sigma ([x_1, x_2, y_1] * y_2)$
 \par
 $+ \rho (y_1) \omega (x_1, x_2, y_2) + \nu (y_1, [x_1, x_2, y_2]) + \sigma (y_1 * [x_1, x_2, y_2])$
 \par
 $+ D(y_1, y_2) \nu (x_1, x_2) + \omega (y_1, y_2, x_1 *  x_2) + \sigma ([y_1, y_2, x_1 * x_2])$
 \par
 $+  \rho (x_1 * x_2) \nu (y_1, y_2) - \rho (y_1 * y_2) \nu (x_1, x_2) - \nu (y_1 * y_2, x_1 * x_2)
 - \sigma ((y_1 * y_2) * (x_1 * x_2))$. \\
 Therefore, comparing these expressions of the left-hand and right-hand sides and using (B2) for $B$, we have
 \par
 $ D(x_1, x_2) \nu (y_1, y_2) + \omega (x_1, x_2, y_1 *  y_2) = \nu ([x_1, x_2, y_1], y_2) + \nu (y_1, [x_1, x_2, y_2])$
 \par
 \hspace{5.8cm}$+  D(y_1, y_2) \nu (x_1, x_2) + \omega (y_1, y_2, x_1 *  x_2)$
 \par
 \hspace{5.8cm}$- \rho (y_2) \omega (x_1, x_2, y_1) + \rho (y_1) \omega (x_1, x_2, y_2)$
 \par
 \hspace{5.8cm}$+ \rho (x_1 * x_2) \nu (y_1, y_2) - \rho (y_1 * y_2) \nu (x_1, x_2)$
 \par
 \hspace{5.8cm}$- \nu (y_1 * y_2, x_1 * x_2)$\\
 which is (CC2). This completes the proof.
\end{proof}
Given a Bol algebra $B$, $V$ its $B$-module, and $(\nu, \omega)$ a $(2,3)$-cocycle of $B$, then Theorem \ref{Th3.4} said
that $B {\oplus}_{(\nu, \omega)} V := (B \oplus V, *_{\nu}, [ , , ]_{\omega})$ is a Bol algebra. Therefore, the short
exact sequence $0 \rightarrow V \rightarrow B {\oplus}_{(\nu, \omega)} V \rightarrow B \rightarrow 0$ is an extension
of $B$ by $V$.
\begin{proposition}\label{Pr4.5}
Let $E_{(\nu, \omega)} : 0 \rightarrow V \rightarrow B {\oplus}_{(\nu, \omega)} V \rightarrow B \rightarrow 0$
and  $E_{(\nu ', \omega ')} : 0 \rightarrow V \rightarrow B {\oplus}_{(\nu ', \omega ')} V \rightarrow B \rightarrow 0$
be two abelian extensions of a Bol algebra $B$ by its $B$-module $V$. Then $E_{(\nu, \omega)}$ and $E_{(\nu ', \omega ')}$ are equivalent if and only if $(\nu, \omega)$ and $(\nu ', \omega ')$ belong to the same cohomology class.
\end{proposition}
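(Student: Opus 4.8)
The plan is to prove the two implications by exhibiting, in each direction, the explicit linear map witnessing equivalence and reading off the coboundary conditions (BB1) and (BB2) directly from the homomorphism identities. Throughout, both extensions carry the same $B$-module $V$, hence the same representation $(\rho, D, \theta)$, so the only data distinguishing them are the cocycles $(\nu, \omega)$ and $(\nu', \omega')$.

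For the forward implication, suppose $E_{(\nu,\omega)}$ and $E_{(\nu',\omega')}$ are equivalent via a Bol algebra homomorphism $\varphi : B\oplus_{(\nu,\omega)} V \to B\oplus_{(\nu',\omega')} V$ fitting the commuting diagram of Definition \ref{Def4.2}. I would first argue that $\varphi\circ i = j$ together with $q\circ\varphi = p$ forces $\varphi(x+u) = x + u + f(x)$ for a unique linear map $f : B \to V$ (namely the $V$-component of $\varphi|_B$). Then I would expand the homomorphism identity $\varphi\big((x_1+u_1)*_{\nu}(x_2+u_2)\big) = \varphi(x_1+u_1)*_{\nu'}\varphi(x_2+u_2)$, and its ternary analogue, using (\ref{E:3.9}) and (\ref{E:3.10}). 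Comparing $V$-components after the $B$-parts cancel, the binary identity collapses to $(\nu-\nu')(x_1,x_2) = \rho(x_1)f(x_2) - \rho(x_2)f(x_1) - f(x_1*x_2)$ and the ternary one to $(\omega-\omega')(x_1,x_2,x_3) = \theta(x_2,x_3)f(x_1) - \theta(x_1,x_3)f(x_2) + D(x_1,x_2)f(x_3) - f([x_1,x_2,x_3])$. These are precisely (BB1) and (BB2) for the pair $(f,\chi)$ with companion $\chi = 0$, so $(\nu,\omega)$ and $(\nu',\omega')$ differ by a coboundary and thus lie in the same class.

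For the converse, suppose the cocycles are cohomologous, so by Definition \ref{Def2.13} there is a pair $(f,\chi)$ realising $(\nu-\nu', \omega-\omega')$ as a coboundary through (BB1) and (BB2). I would set $\varphi(x+u) := x + u + f(x)$; this is visibly a linear bijection (inverse $x+u\mapsto x+u-f(x)$) with $\varphi|_V = \mathrm{id}$ and $q\circ\varphi = p$, so the diagram commutes. It then remains to verify that $\varphi$ intertwines $*_{\nu}$ with $*_{\nu'}$ and $[\,,\,]_{\omega}$ with $[\,,\,]_{\omega'}$. The ternary intertwining is immediate from (BB2), and the binary one reduces, after cancellation, to the content of (BB1).

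The step I expect to be the main obstacle is the companion $\chi$. Inspecting (\ref{E:3.9}) shows that a map of the form $\varphi(x+u)=x+u+f(x)$ can only ever produce the coboundary term $\rho(x_1)f(x_2)-\rho(x_2)f(x_1)-f(x_1*x_2)$, with no room for the extra summand $\Delta_{D,\rho}(x_1,x_2)(\chi)$ occurring in (BB1); symmetrically, the forward direction delivers a coboundary with $\chi=0$. The crux is therefore to reconcile the full coboundary relation of Definition \ref{Def2.13} with the one visible to extension equivalence, that is, to show that the connecting coboundary between two cohomologous cocycles can be chosen with trivial companion (equivalently, that the contribution $\Delta_{D,\rho}(\cdot,\cdot)(\chi)$ is absorbable into a redefinition of $f$ leaving (BB2) intact, as already foreshadowed by the remark following Theorem \ref{Th3.7}). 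This is the one place where the ``coboundary with companion'' formalism does genuine work and where real verification, rather than routine bookkeeping, is needed.
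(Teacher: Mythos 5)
Your forward implication is correct and coincides with the paper's own argument: the commuting diagram forces $\varphi(x+u)=x+f(x)+u$ for a linear $f:B\rightarrow V$, and comparing $V$-components of the two homomorphism identities yields exactly (\ref{E:4.8}) and (\ref{E:4.9}), i.e.\ (BB1)--(BB2) with companion $\chi=0$.

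The converse, however, has a genuine gap, and it is precisely the one you flag yourself and then leave unresolved: when the coboundary joining $(\nu,\omega)$ to $(\nu',\omega')$ has companion $\eta\neq 0$, the map $\varphi(x+u)=x+f(x)+u$ built from the given $f$ is \emph{not} a homomorphism --- its binary defect is exactly ${\Delta}_{D,\rho}(x_1,x_2)(\eta)$. You correctly conjecture that this term should be ``absorbable into a redefinition of $f$ leaving (BB2) intact,'' but you never exhibit the redefinition, and that absorption is the mathematical heart of the proposition (the paper's acknowledgement indicates that an earlier version of this very result had to be corrected on exactly this point). The paper's proof carries out the step as follows: it sets $\widetilde{f}:=f+\widehat{f}$, where $\widehat{f}$ is a pseudoderivation of $B$ into $V$ with companion $-\eta$ in the sense of Definition \ref{Def2.12}, and defines $\varphi(x+u):=x+\widetilde{f}(x)+u$. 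The binary pseudoderivation identity gives $\rho(x_1)\widehat{f}(x_2)-\rho(x_2)\widehat{f}(x_1)-\widehat{f}(x_1*x_2)={\Delta}_{D,\rho}(x_1,x_2)(\eta)$, which cancels the companion term contributed by (BB1), while the ternary pseudoderivation identity is companion-free and so leaves the (BB2) cancellation untouched; hence $\varphi$ intertwines both $*_{\nu}$ with $*_{\nu'}$ and $[\, ,\, ,\,]_{\omega}$ with $[\, ,\, ,\,]_{\omega'}$. Conceptually, a pseudoderivation is exactly a map whose associated coboundary is $(0,0)$, so adding $\widehat{f}$ replaces the representing pair $(f,\eta)$ by $(\widetilde{f},0)$ without changing the coboundary, after which your naive $\varphi$ applies. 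Note that even the paper posits, rather than constructs, a pseudoderivation with the prescribed companion $-\eta$; but in any case your proposal stops exactly where this essential step begins, so as written it does not prove the converse.
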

\begin{proof}
 Let $\varphi : B {\oplus}_{(\nu, \omega)} V  \rightarrow B {\oplus}_{(\nu ', \omega ')} V$ be the corresponding
 homomorphism, i.e.
 \begin{equation}\label{E:4.6}
  \varphi ((x_1 + u_1) *_{\nu} (x_2 + u_2)) = \varphi (x_1 + u_1) *_{\nu '} \varphi (x_2 + u_2)
 \end{equation}
\begin{equation}\label{E:4.7}
 \varphi ([x_1 + u_1, x_2 + u_2, x_3 + u_3]_{\omega}) = [\varphi (x_1 + u_1), \varphi (x_2 + u_2), \varphi (x_3 + u_3)]_{\omega '}
\end{equation}
where $x_i \in B$ and $u_i \in V$. Then the commutativity of diagram (see Definition \ref{Def4.2}) implies that there is a
linear map $f : B \rightarrow V$ such that $\varphi (x_i + u_i) = x_i + f(x_i) + u_i$. The left-hand side of
(\ref{E:4.6}) is
\par
$\varphi ((x_1 + u_1) *_{\nu} (x_2 + u_2)) = \varphi (x_1 * x_2 + \nu (x_1, x_2) + \rho (x_1) (u_2) - \rho (x_2) (u_1))$ (by (\ref{E:3.9}))
\par
\hspace{1.8cm}$= x_1 * x_2 + f(x_1 * x_2) + \nu (x_1, x_2) + \rho (x_1) (u_2) - \rho (x_2) (u_1)$\\
while its right-hand side is
\par
$\varphi (x_1 + u_1) *_{\nu '} \varphi (x_2 +u_2) = x_1 * x_2 + \nu ' (x_1, x_2) + \rho (x_1) (f(x_2) + u_2) - \rho (x_2) (f(x_1) + u_1)$. \\
Thus we obtain
\begin{equation}\label{E:4.8}
 (\nu - \nu ') (x_1, x_2) = \rho (x_1) f(x_2) - \rho (x_2) f(x_1) - f(x_1 * x_2).
\end{equation}
Likewise, from (\ref{E:4.7}), we obtain
\begin{equation}\label{E:4.9}
 (\omega - \omega ') (x_1, x_2, x_3) = D(x_1, x_2)f(x_3) - \theta (x_1, x_3)f(x_2) + \theta (x_2, x_3)f(x_1) - f([x_1, x_2, x_3]).
\end{equation}
Therefore, (\ref{E:4.8}) and (\ref{E:4.9}) imply that $(\nu, \omega)$ and $(\nu ', \omega ')$ are in the same
cohomology class of the cohomology group $H^2 (B,V) \times H^3 (B,V)$.
\par
Conversely, let $(\nu - \nu ', \omega - \omega ') \in B^2 (B,V) \times B^3 (B,V)$ and has companion $\eta$. 
Then there is a linear map $f : B \rightarrow V$ such that
\par
$(\nu - \nu ') (x_1, x_2) = \rho (x_1) f(x_2) - \rho (x_2) f(x_1) + {\Delta}_{D,\rho} (x_1,x_2) (\eta) - f(x_1 * x_2)$
\par
$(\omega - \omega ')(x_1, x_2, x_3) = D(x_1, x_2)f(x_3) - \theta (x_1, x_3)f(x_2) + \theta (x_2, x_3)f(x_1) - f([x_1, x_2, x_3])$.\\
Now define a map $\varphi : B {\oplus}_{(\nu, \omega)} V  \rightarrow B {\oplus}_{(\nu ', \omega ')} V$ by setting
$\varphi (x+u) := x + \widetilde {f}(x) +u$, where $\widetilde{f} = f + \widehat{f}$ and $\widehat{f}$ is a pseudoderivation of $B$ with companion $- \eta$ i.e.
\par
$\widehat{f} (x_1 * x_2) = \rho (x_1) \widehat{f}(x_2) - \rho (x_2) \widehat{f}(x_1) + {\Delta}_{D,\rho} (x_1,x_2) (- \eta) - \widehat{f}(x_1 * x_2)$
\par
\hspace{1.7cm}$= \rho (x_1) \widehat{f}(x_2) - \rho (x_2) \widehat{f}(x_1) - {\Delta}_{D,\rho} (x_1,x_2) (\eta) - \widehat{f}(x_1 * x_2)$,
\par
$\widehat{f} ([x_1, x_2, x_3]) = \theta (x_2, x_3)\widehat{f}(x_1) - \theta (x_1, x_3)\widehat{f}(x_2) + D(x_1, x_2)\widehat{f}(x_3)$.\\
Clearly $\widetilde{f}$ is a linear map $B \rightarrow V$ and $\varphi$ is well-defined. We proceed to show that $\varphi$ is a homomorphism. We have
\par
$\varphi ((x_1 + u_1) *_{\nu} (x_2 + u_2)) = \varphi (x_1 * x_2 + \nu (x_1, x_2) + \rho (x_1)(u_2) - \rho (x_2)(u_1))$ (by (\ref{E:3.9})
\par
\hspace{4.0cm}$= x_1 * x_2 + (f + \widehat{f})(x_1 * x_2) + \nu (x_1, x_2) + \rho (x_1)(u_2) - \rho (x_2)(u_1)$,
\par
$\varphi (x_1 + u_1) *_{\nu '} \varphi (x_2 + u_2) = x_1 * x_2 + \nu ' (x_1, x_2) + \rho (x_1) (\widetilde{f}(x_2)+u_2) - \rho (x_2) (\widetilde{f}(x_1)+u_1)$
\par
\hspace{4.0cm}$= x_1 * x_2 + \nu ' (x_1, x_2) + \rho (x_1) (f + \widehat{f})(x_2) +  \rho (x_1)(u_2)$
\par
\hspace{4.0cm}$-\rho (x_2) (f + \widehat{f})(x_1)-  \rho (x_2)(u_1)$.\\
Therefore
\par
$\varphi ((x_1 + u_1) *_{\nu} (x_2 + u_2)) - \varphi (x_1 + u_1) *_{\nu '} \varphi (x_2 + u_2)$
\par
$= (\nu - \nu ') (x_1, x_2) + (f + \widehat{f})(x_1 * x_2) - \rho (x_1) (f + \widehat{f})(x_2) + \rho (x_2) (f + \widehat{f})(x_1)$
\par
$= (\nu - \nu ') (x_1, x_2) + f(x_1 * x_2) - \rho (x_1) f(x_2) + \rho (x_2) f(x_1) + \widehat{f}(x_1 * x_2) - \rho (x_1) \widehat{f}(x_2) + \rho (x_2) \widehat{f}(x_1) $
\par
$= \rho (x_1) f(x_2) - \rho (x_2) f(x_1) + {\Delta}_{D,\rho} (x_1,x_2) (\eta) - f(x_1 * x_2) + f(x_1 * x_2) - \rho (x_1) f(x_2)$
\par
$+ \rho (x_2) f(x_1) - {\Delta}_{D,\rho} (x_1,x_2) (\eta)$ (by the expressions of $\nu - \nu '$ and $\widehat{f}$)
\par
$= 0$\\
so we get that
\begin{equation}\label{E:4.10}
 \varphi ((x_1 + u_1) *_{\nu} (x_2 + u_2)) = \varphi (x_1 + u_1) *_{\nu '} \varphi (x_2 + u_2).
\end{equation}
Next we have
\par
$\varphi ([x_1 + u_1, x_2 + u_2, x_3 + u_3]_{\omega}) =
[x_1, x_2, x_3] + \widetilde{f} ([x_1, x_2, x_3]) +
\omega (x_1, x_2, x_3)$
\par
\hspace{5.0cm}$+ D(x_1, x_2)(u_3) - \theta (x_1, x_3)(u_2) + \theta (x_2, x_3)(u_1)$\\
while
\par
$[\varphi (x_1 + u_1), \varphi (x_2 + u_2), \varphi (x_3 + u_3)]_{\omega '} = [x_1, x_2, x_3] + {\omega '} (x_1,x_2,x_3)$
\par
$+ D(x_1,x_2)(\widetilde{f} (x_3) + u_3)
- \theta (x_1, x_3) (\widetilde{f} (x_2) + u_2)
+ \theta (x_2, x_3) (\widetilde{f} (x_1) + u_1)$
\par
$= [x_1, x_2, x_3] + {\omega '} (x_1,x_2,x_3) + D(x_1, x_2) f(x_3)$
\par
$+ D(x_1, x_2) \widehat{f}(x_3) + D(x_1,x_2) (u_3) - \theta (x_1, x_3) f(x_2) - \theta (x_1, x_3)
\widehat{f} (x_2) - \theta (x_1, x_3) (u_2)$
\par
$+ \theta (x_2, x_3) f(x_1) + \theta (x_2, x_3)
\widehat{f} (x_1) + \theta (x_2, x_3) (u_1)$.\\
Thus
\par
$\varphi ([x_1 + u_1, x_2 + u_2, x_3 + u_3]_{\omega}) -
[\varphi (x_1 + u_1), \varphi (x_2 + u_2), \varphi (x_3 + u_3)]_{\omega '}$
\par
$= f([x_1, x_2, x_3]) - \theta (x_2, x_3) f(x_1) + \theta (x_1, x_3) f(x_2) - D(x_1,x_2) f(x_3) + \widehat{f} ([x_1, x_2, x_3])$
\par
\hspace{0.5cm}$- \theta (x_2, x_3)
\widehat{f} (x_1) + \theta (x_1, x_3)
\widehat{f} (x_2) - D(x_1, x_2) \widehat{f}(x_3) + (\omega - \omega ') (x_1, x_2, x_3)$
\par
$= 0$ (by the definition of $\omega - \omega '$ and
$\widehat{f}$)\\
so we get
\begin{equation}\label{E:4.11}
 \varphi ([x_1 + u_1, x_2 + u_2, x_3 + u_3]_{\omega}) =
[\varphi (x_1 + u_1), \varphi (x_2 + u_2), \varphi (x_3 + u_3)]_{\omega '}.
\end{equation}
Therefore, by (\ref{E:4.10}) and (\ref{E:4.11}), we obtain that there is a homomorphism $\varphi : B {\oplus}_{(\nu, \omega)} V  \rightarrow B {\oplus}_{(\nu ', \omega ')} V$ so that $E_{(\nu, \omega)}$ and $E_{(\nu', \omega')}$ are equivalent. This completes the proof.
\end{proof}
\begin{remark}\label{Rem4.6}
From the consideration above (Propositions 4.3, 4.4, and 4.5) we get that there is a one-to-one correspondence between the set of equivalence classes of abelian extensions
of a given Bol algebra $B$ by its $B$-module $V$ and the cohomology group $H^2 (B,V) \times H^3 (B,V)$.
\end{remark}
\par
\par
\section*{Acknowledgement}
The author would like to thank Professor Tao Zhang (Henan Normal University, Xinxiang, P. R. China) for his observation on the earlier version of Proposition \ref{Pr4.5} which led to its correction as above. Thanks also go to the referee for useful suggestions that helped in the improvement of this paper.

\footnotesize{

\end{document}
\begin{thebibliography}{99}

\bibitem{G1}
M. Gerstenhaber: \emph{The cohomology structure of an associative ring}, Ann. Math. {\bf 78} (1963), 267-288.
\bibitem{G2}
M. Gerstenhaber: \emph{On the deformation of rings and algebras}, Ann. Math. {\bf 79} (1964), 59-103.
\bibitem{Ha}
B. Harris: \emph{Cohomology of Lie triple systems and Lie algebras with involution}, Trans. Amer. Math. Soc. {\bf 98} (1961), 148-162.
\bibitem{HP}
I. R. Hentzel, L. A. Peresi: \emph{Special identities for Bol algebras}, Linear Algebra Appl. {\bf 436} (2012), 2315-2330.
\bibitem{HoP}
T. L. Hodge, B. J. Parshall: \emph{On the representation theory of Lie triple systems}, Trans. Amer. Math. Soc. {\bf 354} (2002), 4359-4391.
\bibitem{Iss}
A. N. Issa: \emph{On enveloping Lie algebras of hyporeductive triple algebras}, Comm. Algebra  {\bf 38} (2010), 3111-3124.
\bibitem{KT}
F. Kubo, Y. Taniguchi: \emph{A controlling cohomology of the deformation theory of Lie triple systems}, J. Algebra {\bf 278} (2004), 242-250.
\bibitem{KZ}
E. N. Kuz'min, O. Zaidi: \emph{Solvable and semisimple Bol algebras}, Algebra and Logic {\bf 32} (1994), 361-371.
\bibitem{LCM}
J. Lin, L. Chen, Y. Ma: \emph{On the deformation of Lie-Yamaguti algebras}, Acta Math. Sinica (English Series) {\bf 31} (2015), 938-946.
\bibitem{L}
O. Loos: \emph{\"Uber eine Beziehung zwischen Malcev-Algebren und Lie-Tripelsystemen}, Pacific J. Math. {\bf 18} (1966), 553-562.
\bibitem{M1}
P. O. Mikheev: \emph{Geometry of smooth Bol loops}, [Ph. D. Thesis], Friendship Univ., Moscow (1986).
\bibitem{M2}
P. O. Mikheev: \emph{Commutator algebras of right alternative algebras}, Matematicheskii Issledovanya {\bf 113} (1992), 62-65.
\bibitem{MS}
P. O. Mikheev, L. V. Sabinin: \emph{Quasigroups and differential geometry}, in: Quasigroups and Loops. Theory and Applications, O. Chein, H. O. Pflugfelder, J. D. H. Smith (eds.), Sigma Series in Pure Mathematics, vol. 8,
Heldermann, Berlin (1990), 357-430.
\bibitem{NB}
N. Ndoune, T. Bouetou Bouetou: \emph{On representations of Bol algebras}, J. Generalized Lie Theory Appl. {\bf S2} (2016), 1-6.
\bibitem{NR1}
A. Nijenhuis, R. Richardson: \emph{Cohomology and deformations in graded Lie algebras}, Bull. Amer. Math. Soc. {\bf 72} (1966), 1-29.
\bibitem{NR2}
A. Nijenhuis, R. Richardson: \emph{Commutative algebra cohomology and deformations of Lie and associative algebras}, J. Algebra {\bf 9} (1968), 42-105.
\bibitem{Per}
J. M. P\'erez-Izquierdo: \emph{An envelope for Bol algebras}. J. Algebra {\bf 284} (2005), 480-493.
\bibitem{S}
L. V. Sabinin: \emph{Smooth Quasigroups and Loops}, Math. Appl. vol. 492, Kluwer, Dordrecht (1999).
\bibitem{SM}
L. V. Sabinin, P. O. Mikheev: \emph{Analytic Bol loops}, Webs and Quasigroups (1982), 102-109.
\bibitem{Sag}
A. A. Sagle: \emph{Malcev algebras}, Trans. Amer. Math. {\bf 101} (1961), 426-458.
\bibitem{Y1}
K. Yamaguti: \emph{On the Lie triple system and its generalization}, J. Sci. Hiroshima Univ. Ser. A {\bf 21} (1958), 155-160.
\bibitem{Y2}
K. Yamaguti: \emph{On the cohomology space of Lie triple systems}, Kumamoto J. Sci. Ser. A  {\bf 5} (1960), 44-52.
\bibitem{Y3}
K. Yamaguti: \emph{On the theory of Malcev algebras}, Kumamoto J. Sci. Ser. A {\bf 6} (1963), 9-45.
\bibitem{Y4}
K. Yamaguti: \emph{On weak representations of Lie triple systems}, Kumamoto J. Sci. Ser. A {\bf 8} (1968), 107-114.
\bibitem{Y5}
K. Yamaguti: \emph{On cohomology groups of general Lie triple systems}, Kumamoto J. Sci. Ser. A {\bf 8} (1969), 135-146.
\bibitem{Y6}
K. Yamaguti: \emph{A remark on Lie triple systems associated with Malcev algebras}, Mem. Fac. Gen. Ed., Kumamoto Univ., Ser. Math. Sci. {\bf 10} (1975), 1-4.
\bibitem{Zh}
T. Zhang: \emph{Notes on cohomologies of Lie triple systems}, J. Lie Theory {\bf 24} (2014), 909-929.
\bibitem{ZL}
T. Zhang, J. Li: \emph{Deformations and extensions of Lie-Yamaguti algebras}, Linear and Multilinear Algebra {\bf 63} (2015), 2212-2231.


\end{thebibliography}
